\theoremstyle{plain}
\newtheorem{theorem}{Theorem}[section]
\newtheorem{lemma}[theorem]{Lemma}
\newtheorem{proposition}[theorem]{Proposition}
\newtheorem{definition}[theorem]{Definition}
\newtheorem*{definition*}{Definition}
\theoremstyle{remark}
\newtheorem{remark}[theorem]{Remark}
\newtheorem{example}[theorem]{Example}
\newtheorem*{remark*}{Remark}
\newtheorem*{example*}{Example}
\newtheorem*{notation*}{Notation}
\numberwithin{equation}{section}
\def\N{{\mathbb N}}
\def\R{{\mathbb R}}
\def\S{{\mathbb S}}
\def\norma #1{ \left\vert\left\vert #1 \right\vert\right\vert}
\def\nm #1{ \left\langle #1 \right\rangle}
\def\P{{\mathbb{P}}}
\def\AB{{\rm AB}}
\def\H{{\mathscr H}}
\def\BV{{\rm BV}}
\def\T{{\mathcal T}}
\def\L{\mathcal{L}}
\def\B{\mathscr{B}}
\def\X{\mathscr{X}}
\def\PR{{\rm Per}}
\def\w2{\stackrel{2}{\rightharpoonup}}
\def\weak{\rightharpoonup}
\def\strong2{\stackrel{2}{\rightarrow}}
\definecolor{viola}{rgb}{0.3,0,0.7}
\definecolor{ciclamino}{rgb}{0.5,0,0.5}
\definecolor{rosso}{rgb}{0.8,0,0}
\renewcommand{\P}{{\mathbb P}}
\newcommand{\F}{{\mathcal F}}
\newcommand{\eps}{\varepsilon}
\newcommand{\beq}{\begin{equation}}
\newcommand{\eeq}{\end{equation}}
\newcommand{\bal}{\begin{aligned}}
\newcommand{\eal}{\end{aligned}}
\newcommand{\ben}{\begin{enumerate}}
\newcommand{\beni} {\begin{enumerate}[(i)]}
\newcommand{\een}{\end{enumerate}}
\newcommand{\bit}{\begin{itemize}}
\newcommand{\eit}{\end{itemize}}
\newcommand{\beqw}{\begin{equation*}}
\newcommand{\eeqw}{\end{equation*}}
\newcommand{\bex}{\begin{example}}
\newcommand{\eex}{\end{example}}
\newcommand{\bre}{\begin{example}}
\newcommand{\ere}{\end{example}}
\newcommand{\bma}{\begin{bmatrix}}
\newcommand{\ema}{\end{bmatrix}}
\newcommand{\cH}{\mathcal{H}}
\newcommand{\cE}{\mathcal{E}}
\newcommand{\cA}{\mathcal{A}}
\newcommand{\de}{\mathrm{d}}
\renewcommand{\div}{\operatorname{div}}
\newcommand{\G}{\mathscr{G}}
\newcommand{\cP}{\mathscr{P}}
\renewcommand{\tilde}{\widetilde}
\newcommand{\res}{\mathop{\hbox{\vrule height 7pt width .5pt depth
0pt\vrule height .5pt width 6pt depth0pt}}\nolimits}
\title[$\Gamma$-convergence of self-aggregation energies]{$\Gamma$-convergence of discrete energies modeling self-aggregation of stochastic particles}
\author[L. Lussardi]{Luca Lussardi}
\address[L. Lussardi]{Dipartimento di Scienze Matematiche ``G.~L.~Lagrange'',
Politecnico di Torino, Corso Duca degli Abruzzi 24,
10129 Torino, Italy.}
\email{luca.lussardi@polito.it}
\author[A. Melchor Hernandez]{Anderson Melchor Hernandez}
\address[A. Melchor Hernandez]{Dipartimento di Scienze Matematiche ``G.~L.~Lagrange'',
Politecnico di Torino, Corso Duca degli Abruzzi 24,
10129 Torino, Italy.}
\email{anderson.melchor@polito.it}
\author[M. Morandotti]{Marco Morandotti}
\address[Marco Morandotti]{Dipartimento di Scienze Matematiche ``G.~L.~Lagrange'',
Politecnico di Torino, Corso Duca degli Abruzzi 24,
10129 Torino, Italy.}
\email{marco.morandotti@polito.it}
\date{\today}
\keywords{Gamma-convergence, Wasserstein distance, transportation maps, self-aggregation, lipid bilayer.}
\begin{document}
\subjclass[2020]{49J45 
(74K15, 
74S60)}

\begin{abstract}
In this work, we demonstrate that a functional modeling the self-aggregation of stochastically distributed lipid molecules can be obtained as the $\Gamma$-limit of a family of discrete energies driven by a sequence of independent and identically distributed random variables. These random variables are intended to describe the asymptotic behavior of lipid molecules that satisfy an incompressibility condition. The discrete energy keeps into account the interactions between particles.  We resort to transportation maps to compare functionals defined on discrete and continuous domains, and we prove that, under suitable conditions on the scaling of these maps as the number of random variables increases, the limit functional features  an interfacial term with a Wasserstein-type penalization.
\end{abstract}

\maketitle

\tableofcontents

\section{Introduction}

Lipid bilayers are the fundamental component of cell membranes. 
They are composed of two layers of lipid molecules, with the hydrophilic heads facing outward and the hydrophobic tails facing inward. 
Lipid bilayers act as barriers, preventing the uncontrolled exchange of materials between the inside and outside of the cell \cite{blom2004}; 
they are dynamic and can change their shape and composition in response to various stimuli. 
For example, cells can regulate the fluidity of the membrane by altering the types of lipids, allowing them to respond to changes in temperature or other environmental factors \cite{jain2014}. 
Understanding the structure and functioning of lipid bilayers is essential for modeling basic biological processes such as cell signaling and membrane transport \cite{seifert1997}.

In the last decades, a vast body of literature has been devoted to describing different mathematical settings to study the properties of biomembranes in a rigorous manner. 
In this regard, Canham \cite{canham1970} and Helfrich \cite{helfrich1973} considered continuous models, where a lipid bilayer membrane assumes curved shapes to accommodate a spontaneous curvature which is influenced by factors such as lipid composition, temperature, and external stresses \cite{seifert1997}. 
In a similar setup, motivated by the works of Bates \cite{bates1990} and Fredrickson \cite{fredrickson1996} on chains of copolymers, the authors of \cite{blom2004} proposed a continuous model to tackle the problem of self-aggregation of lipid molecules, which was not addressed in Helfrich's seminal paper \cite{helfrich1973}. 
We also refer the reader to \cite{bailo2020,burger2018,carrillo2019,fetecau2011}, where pattern formation of cell aggregates is analyzed by means of kinetic models or aggregation-diffusion equations.

Lipid membranes can be studied at different length scales; in \cite{pelet2009}, the authors proposed three different models for micro-, meso, and macroscopic scales through suitable energy functionals for idealized and rescaled head-tail densities.
Their main result is the rigorous meso-to-macro limit in the two-dimensional setting, which recovers the Canham--Helrfich model.
In the same paper a formal derivation of the mesoscopic model was also proposed.
From that paper, a series of contributions stemmed, where the nature of curved membranes was analyzed \cite{lussardi2014,lussardi2016,peletier2010}. 
However, even though these models well describe self-aggregation of lipid molecules, they do not take into account the discrete nature of the problem. 
In fact, there are few works where the behavior of lipid molecules is modeled by functionals defined on discrete structures, see, e.g., \cite{gladbach2021,van2019}. 
One main reason for the lack of results in this direction is that capturing the contribution of the principal curvatures in a discrete setting is a subtle task. 

In this paper, we perform the rigorous micro-to-meso derivation that was proposed in \cite{pelet2009}.
To do so, we upscale (i.e., we take the limit as $n\to\infty$) discrete energies depending on a large number of molecules modeled by independent and identically distributed (according to a probability density $\rho$) random variables $X_1,\ldots,X_n$ ($n\in\N$) contained in some bounded region $D\subset\R^d$. 
Given $\delta_n>0$, we consider a rescaled kernel $\kappa_{\delta_n}(X_{i}-X_{j})$ that takes into account the interaction between lipid molecules at scale~$\delta_n$, for $i,j=1,\ldots, n$.
As the number~$n$ of molecules increases, the interaction length-scale $\delta_n$ must vanish due to the boundedness of the container~$D$, so that a crucial role in the identification of the mesoscopic limit will be played by the relationship between~$n$ and~$\delta_n$.
We encode this interaction in a functional $GF_{n,\delta}$ defined, for a general interaction length-scale $\delta$ (see the precise definition in \eqref{tilde} below), in such a way that the amphiphilic nature of the molecules is taken into account: more precisely, the vicinity of hydrophilic heads and water or the hydrophobic tails is favored, whereas the vicinity of the hydrophobic tails and water is penalized.

A major difficulty in taking the limit as $n\to\infty$ and to recover a continuous functional is due to the stochasticity of the system; a key ingredient in performing this limit is the used of transportation maps to set the problem in a continuous framework.
Our main result is Theorem~\ref{ourgamma} below where we show that, under a suitable rescaling of $\delta_{n}$ (see assumption (H3) below), the discrete $GF_{n,\delta_{n}}$ $\Gamma$-converges to a weighted total variation functional (originally introduced in \cite{baldi2001}). 
This means that, when modeling lipid bilayers using a sequence $X\coloneqq\{X_{i}\}_{i\in \N}$ of random variables that interact at the scales dictated by hypothesis (H3), the resulting mesoscopic energy is a surface functional which depends on the geometric properties of the interface between the heads and the tails of the amphiphilic molecules.

As a byproduct of our Theorem~\ref{ourgamma}, in the case of uniform distributed random variables, that is, when $\rho=|D|^{-1}$, we recover the usual perimeter functional formally derived in \cite{pelet2009}. 
More precisely, we consider the functional $\G_{n,\delta_{n}}$ defined in \eqref{ourfunct} below, given by the sum of $GF_{n,\delta_{n}}$ and a Wasserstein-like term and we prove in Theorem~\ref{mainresult} that $\G_{n,\delta_{n}}$ $\Gamma$-converges to the same mesoscopic functional of \cite{pelet2009}.
This is a significant result, which sets the ground for a derivation of the Canham--Helfrich functional starting from the more natural discrete stochastic setting.



The paper is organized as follows: in Section~\ref{sec:preli}, we recall the derivation of the model proposed in \cite{pelet2009} and some preliminary concepts needed later on. In  Section~\ref{hompin}, we introduce our main assumptions and state our main results, whose proofs are presented in Section~\ref{dimost}.

 
 
\section{Preliminaries}\label{sec:preli}
In  this part, we start by recalling the derivation of the mesoscopic model to treat bilayer molecules.
\subsection{Lipid models}
A lipid molecule consists of a head and two tails. The head is usually charged and hydrophilic, while the tails are hydrophobic. This difference in polarity causes lipids to aggregate, and inside a cell, lipids are typically found in a bilayer structure (see Figure \ref{imag1} below).

\begin{figure}[!htp]
\begin{center}
\includegraphics[width=0.4\textwidth]{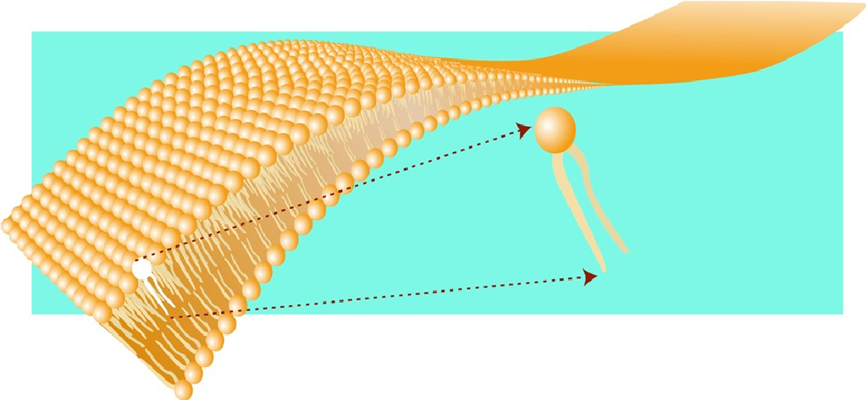}
\end{center}
\caption{A typical bilayer structure (see \cite[Figure~1]{blom2004}).}
\label{imag1}
\end{figure}
In such a structure, the energetically unfavorable tail-water interactions are avoided by collecting the tails together in a water-free region created by the heads. Suppose that we have a system of particles composed of $X^{t}_{i}$, $X^{h}_{i}$, and $X^{w}_{i}$, which are the lipid tail, lipid head, and water beads, respectively, for $i=1,\ldots, n$, where $n\in \N$. Assuming that the beads are confined to a space $D\subset \R^{d}$, the full microscopic state space for the system is then~$D^{3n}$. Elements $X\in D^{3n}$ are called microstates \cite{lussardi2016}. We describe the system in terms of probability densities on~$D^{3n}$; that is, the state $\psi$ of the system is described through a probability density on~$D^{3n}$
\begin{align*}
    \psi\in \cE,\quad \text{where}\quad \cE\coloneqq\bigg\{ \psi \colon D^{3n}\rightarrow \R_{+}: \int_{D^{3n}}\psi(x)\,\de x=1\bigg\}.
\end{align*}
Moreover, suppose that $X^{t}$, $X^{h}$, and $X^{w}$ are composed of independent and identically distributed random variables, which are distributed according to probability laws~$\mu$,~$\nu$, and~$\eta$, respectively, and that these distributions depend on~$\psi$. Suppose that~$\mu$,~$\nu$, and~$\eta$ are absolutely continuous with respect to the $d$-dimensional Lebesgue measure $\L^{d}$, and thus have densities~$u$,~$v$, and~$w$, respectively. A typical assumption when modeling biomembranes is that $u +v +w =1$.
This condition is usually referred to as the incompressibility condition for particles \cite[Appendix A]{pelet2009}. It is a point-wise relation that aims to balance the portion of particles of different types stored in~$D$. 
To specify the behavior of the system of particles, we consider two functionals: the \emph{ideal} free functional $F^{\rm id}\colon \cE\rightarrow \R$, $\psi\mapsto F^{\rm id}(\psi)$, which models the effects of entropy and the interaction between beads of the same type, and the \emph{non-ideal} free energy functional $F^{\rm nid}\colon \cE\rightarrow \R$, $\psi\mapsto F^{\rm nid}(\psi)$, which represents the interaction of beads of different types. The total energy of the system is then $F^{\rm nid}(\psi)+F^{\rm id}(\psi)$. Typical terms in the non-ideal free energy are convolution integrals in which proximity of hydrophilic beads and tail beads is penalized: up to a physical constant which we disregards, $F^{\rm nid}$ can be expressed as
\begin{align*}
F^{\rm nid}(\psi)\coloneqq\iint_{D\times D}(v(x) + w(x))u(y)\kappa(x-y)\,\de x\de y,
\end{align*}
where~$\kappa$ is the kernel of interaction. 
Since we are interested in interactions at a small scale $\delta>0$, the kernel $\kappa$ is usually rescaled to $\kappa_\delta \coloneqq \delta^{-d}\kappa(\cdot/\delta)$. 
For the ideal free energy $F^{\rm id}$, one can assume the form 
$$F^{\rm id}(\psi)= \int_{D^{3n}} \psi(x) H^{\rm id}(x)\,\de x,$$ 
where $H^{\rm id}$ is a suitable interaction potential. 
The choice of the potential~$H^{\rm id}$ models certain phenomena, such as attraction, repulsion, and combinations thereof. 
The function~$H^{\rm id}$ is usually called the \emph{interaction potential}, and it is in general assumed to be radially symmetric \cite{bailo2020,burger2018}, i.e., $H^{\rm id}(x)=w(\vert x\vert)$, for some $w\colon [0,+\infty)\rightarrow \R$. 

In 2009, Peletier and R\"oger \cite{pelet2009} proposed a mesoscale model for biomembranes in the form of an energy functional for idealized and rescaled head-tail densities. 
Such a model is derived from the functional $F^{\rm nid}+F^{\rm id}$, in which heads and tails are treated as separate particles. 
The term $F^{{\rm nid}}$ penalizes the proximity of tails to either heads or water particles, and the term $F^{{\rm id}}$ implements the head-tail connection. 
In particular, they proposed $F^{{\rm id}}(\psi)= W_{p}(u,v)$, the Wasserstein distance of order $p\in [1,\infty)$. From an analytic point of view, configurations of head and tail particles are modeled by suitable density functions belonging to the space of functions of bounded variation in $\R^{d}$ \cite{ambrosio2000}. 
A formal upscaling procedure leads to the following model: configurations of head and tail particles are described by two rescaled density functions
 \begin{align*}
     u\in {\rm BV}(\R^{d};\{0,\epsilon^{-1}\}), \qquad v\in L^{1}(\R^{d}; \{0,\epsilon^{-1}\})
 \end{align*}
 with $uv=0$ a.e.~in $\R^{d}$ and with prescribed total mass $M>0$, namely
 \begin{align}\label{masscost}
     \int_{\R^d} u(x)\, \de x=\int_{\R^d} v(x)\,\de x=M.
 \end{align}
 Here~$\epsilon>0$ is a small parameter measuring the size of the support of~$u$. 
 We denote by $K_{\epsilon}\subset L^{1}(\R^{d}\times \R^{d})$ the space of such configurations, that is
$$
K_{\epsilon}\coloneqq\big\{(u,v)\in  \BV(\R^{d},\{0,\epsilon^{-1}\})\times L^{1}(\R^{d}, \{0,\epsilon^{-1}\}): \text{\eqref{masscost} holds and $uv=0$ a.e.~in $\R^d$}\big\}.
$$
In \cite{pelet2009} the asymptotic behavior of  the energy functional
\begin{align}\label{mesoscopic}
F_{\epsilon}(u,v)\coloneqq
     \begin{cases}
     \displaystyle \epsilon\norma{Du} +\frac{1}{\epsilon}W_{1}(u,v) & \text{if $(u,v)\in K_{\eps}$,}\\
     +\infty & \text{otherwise in $L^{1}(\R^{d})\times L^1(\R^{d})$}
     \end{cases}
 \end{align}
is studied.
The term~$\norma{Du}$ represents the total variation of~$u$
and it measures the boundary of the support of tails. 
By a rigorous analysis via $\Gamma$-convergence, it is shown in~\cite{pelet2009} that the energy \eqref{mesoscopic} has a preference for thin structures without ends and a resistance to bending of the structure. 
To be precise, in the limit as $\epsilon\to0$, the densities concentrate along families of regular curves, and a Euler-type functional appears. 
The result is in the two-dimensional case, whereas in the three-dimensional case one might expect convergence to a Canham--Helfrich-type functional; nonetheless, in this situation, only partial results are known \cite{lussardi2014, lussardi2016}.

\subsection{$\Gamma$-convergence}
In this section, we briefly review the definition of $\Gamma$-convergence of a sequence of functionals defined on a metric space, both in the deterministic and in the stochastic case.
Let $(Y,d_{Y})$ be a metric space and let $F_{n}\colon Y\rightarrow [0,\infty]$ be a sequence of functionals.

\begin{definition}\label{congam}
The sequence $\{F_{n}\}_{n\in\N}$ $\Gamma$-converges with respect to metric $d_{Y}$ to the functional $F\colon Y\rightarrow [0,\infty]$ if the following inequalities hold:
\begin{enumerate}
\item[i)] For every $y\in Y$ and every sequence $\{y_{n}\}_{n\in\N}$ converging to $y$
\begin{align*}
F(y)\leq \liminf_{n\rightarrow \infty}F_{n}(y_{n});
\end{align*}
\item[ii)] For every $y\in Y$ there exists a sequence $\{\bar{y}_{n}\}_{n\in\N}$ converging to $y$ such that
\begin{align*}
\limsup_{n\rightarrow \infty}F_{n}(\bar{y}_{n})\leq F(y).
\end{align*}
\end{enumerate}
In this case, we then say that $F$ is the $\Gamma$-limit of the sequence of functionals $\{F_{n}\}_{n\in\N}$ with respect to the metric $d_{Y}$. Since the sequence $\{\bar{y}_{n}\}_{n\in\N}$ in $ii)$  satisfies condition i) we have $\lim_{n\rightarrow \infty}F(\bar{y}_{n})=F(y)$, and the sequence $\{\bar{y}_{n}\}_{n\in\N}$ is called recovery sequence.
\end{definition}
In the same way, we define the  $\Gamma$-convergence for random functionals.
\begin{definition}
Let $(\Omega,\T,\P)$ be a probability space. 
For a sequence of random functionals $F_{n}\colon Y\times \Omega\rightarrow [0,+\infty]$ and $F\colon Y\rightarrow [0,+\infty]$ a deterministic functional, we say that the sequence of functionals $\{F_{n}\}_{n\in\N}$ $\Gamma$-converges with respect to the metric $d_{Y}$ to $F$, if for $\P$-almost every $\omega\in\Omega$, the sequence $\{F_{n}(\cdot,\omega)\}_{n\in\N}$ $\Gamma$-converges to~$F$ according to Definition \ref{congam}.  
\end{definition}
We now recall  the notion of convergence in law and convergence in probability.
Let $Y$ and $Z$ be metric spaces. 
Given a probability measure $\mu\in \cP(Y)$ and a measurable mapping $T\colon Y\rightarrow Z$, we denote by $\nu \coloneqq T_{\#}\mu$ the push-forward of~$\mu$ by~$T$, namely the measure $\nu\in \cP(Z)$ such that $\nu(A)=(T_{\#}\mu)(A)=\mu(T^{-1}(A))$, for any Borel set $A\subset Z$.
\begin{definition}
Let $\{F_{n}\}_{n\in\N}$ be a sequence of random functionals with~$F_{n}$ defined on the probability space $(\Omega_{n},\T_{n},\P_{n})$. 
Let~$F_{\infty}$ be a random  functional defined on the probability space $(\Omega_{\infty},\T_{\infty},\P_{\infty})$.
We say that $\{F_{n}\}_{n\in\N}$ \emph{converges in law} to~$F_{\infty}$ if the sequence of measures $\mu_{n}\coloneqq (F_{n})_{\#}\P_{n}$ converges weakly-* as $n\rightarrow \infty$ to $\mu_{\infty}\coloneqq (F_{\infty})_{\#}\P_{\infty}$.
\end{definition}
\begin{definition}
Let $\{F_{n}\}_{n\in\N}$ be a sequence of random functionals defined on a common probability space $(\Omega,\T,\P)$. 
We say that~$F_{n}$ \emph{converges in probability} to a random functional~$F_{\infty}$ if 
\begin{align*}
\lim_{n\rightarrow \infty}\P\left(\{\omega\in \Omega: d(F_{n},F_{\infty})>\eta \}\right)=0, \quad\text{for all $\eta>0$}
\end{align*}
where $d$ is a suitable metric in the space of functionals (see \cite[Proposition~1.12]{modica1986}). 
\end{definition}
A well-known result about the convergence of random functionals is the following one.
\begin{proposition}[{\cite[Proposition 2.9]{modica1986}}]\label{DMM_p29}
Suppose that $\{F_{n}\}_{n}$ is a sequence of random functionals which converges in law to $F_{\infty}\colon Y\times \Omega\rightarrow [0,+\infty]$. 
If~$F_{\infty}$ is constant, that is, if there exists $F\colon Y\to [0,+\infty]$ such that $F_{\infty}(\omega)=F$ for  $\P$-almost every $\omega\in \Omega$, then the convergence  of~$\{F_{n}\}_{n}$ in law and the convergence in probability are equivalent.
\end{proposition}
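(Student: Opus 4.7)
The plan is to prove the two implications separately, using only standard facts about weak convergence of probability measures on a metric space. The underlying topology on the space of functionals is assumed to be metrizable (this is the framework of Dal Maso--Modica, where the space of l.s.c.\ functionals is endowed with a metric $d$ inducing $\Gamma$-convergence); I will treat $d$ as the abstract ``suitable metric'' mentioned in the statement.

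For the implication ``convergence in probability $\Rightarrow$ convergence in law'', the forward plan is to fix a bounded continuous test function $g$ on the space of functionals and to show $\int g\,\de\mu_{n}\to\int g\,\de\mu_{\infty}$. Since $F_n\to F_\infty=F$ in probability, $g(F_n)\to g(F)$ in probability as well (by continuity of $g$ along the metric $d$), and boundedness of $g$ together with the dominated convergence theorem yields $\E[g(F_n)]\to g(F)$. By the change of variables formula this is exactly $\int g\,\de\mu_n\to \int g\,\de\mu_\infty$, i.e.\ weak-$\ast$ convergence $\mu_n\weakstar \mu_\infty$.

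For the reverse implication ``convergence in law to a constant $\Rightarrow$ convergence in probability'', the key observation is that when $F_\infty = F$ is deterministic $\P$-a.s., the limit law is the Dirac mass $\mu_\infty=\delta_{F}$. Fix $\eta>0$ and consider the open ball $B_\eta(F)\coloneqq \{G : d(G,F)<\eta\}$. Then $\mu_\infty(B_\eta(F))=1$. By the Portmanteau theorem (which applies since we work in a metric space), weak-$\ast$ convergence $\mu_n\weakstar \mu_\infty$ implies
\begin{equation*}
\liminf_{n\to\infty}\mu_n(B_\eta(F))\geq \mu_\infty(B_\eta(F))=1.
\end{equation*}
Since $\mu_n(B_\eta(F))=\P(d(F_n,F)<\eta)$, this gives $\P(d(F_n,F)\geq \eta)\to 0$, which is exactly convergence in probability.

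The main obstacle, and the reason the statement needs ``$F_\infty$ constant'', is precisely the step above: for a general (non-deterministic) limit $F_\infty$ one cannot upgrade weak convergence of laws to convergence in probability, because convergence in law only captures the distribution and ignores the joint measurability structure of the sequence. The Dirac-mass case is the degenerate situation where the two notions coincide; everything else in the argument (Portmanteau, dominated convergence, continuity of bounded test functions) is routine once the metric framework on the space of functionals is fixed.
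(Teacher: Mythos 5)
Your argument is correct, and it is the standard one; note that the paper does not prove this statement at all but imports it from Dal Maso and Modica (Proposition 2.9 of the cited reference), so there is no internal proof to compare against. Both implications are handled the right way: the Portmanteau inequality for the open ball $B_\eta(F)$ is exactly what makes the Dirac-mass case special, and your closing remark correctly identifies why the constancy of $F_\infty$ is indispensable. Two small points deserve care. First, the invocation of the dominated convergence theorem is a slight shortcut: convergence in probability does not give pointwise a.e.\ convergence of $g(F_n)$, so you should either pass to a.s.\ convergent subsequences (every subsequence of $\E[g(F_n)]$ has a further subsequence converging to $g(F)$, hence the whole sequence converges) or use the direct estimate $\E|g(F_n)-g(F)|\leq \eps+2\lVert g\rVert_\infty\,\P(|g(F_n)-g(F)|\geq\eps)$. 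Second, the paper's definition of convergence in law allows each $F_n$ to live on its own probability space $(\Omega_n,\T_n,\P_n)$, while convergence in probability requires a common space; the equivalence is only meaningful under the latter assumption, and it would be worth stating that explicitly at the outset rather than leaving it implicit in the choice of metric framework.
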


\subsection{The $TL^{p}$ topology and some of its properties}\label{mappe}
In this section, we recall some useful facts about the topology of measure-functions pairs spaces. This topology was introduced in \cite{garcia2016} to treat the convergence of functionals defined on graphs. Given an open bounded subset $D\subset \R^{d}$, we define for all $p\in[1,+\infty)$
\begin{align*}
TL^{p}(D)\coloneqq \{(\mu,f):\mu\in \cP(D), f\in L^{p}_\mu(D)\}.   
\end{align*}
This space can be endowed with a distance (see \eqref{minimi} below) and thus it is a metric space \cite[Proposition 3.3]{garcia2016}.\footnote{The definition for the case $p=\infty$ can be presented in the usual way; we neglect it here for the sake of conciseness.} 
Note that for a generic bounded Borel map $f\colon D\rightarrow \R$, the following change of  variables
\begin{align*}
\int_{D}f(T(x))\,\de\mu(x)=\int_{D}f(y)\,\de\nu(y),  
\end{align*}
holds true if and only if $\nu=T_{\#}\mu$, in which case we say that the Borel map $T\colon D\rightarrow D$ is a \emph{transportation map} between the measures $\mu\in \cP(D)$ and $\nu\in \cP(D)$. 
In this case, we associate with~$T$ a transportation plan $\pi_{T}\in \Gamma(\mu,\nu)$ defined by 
\begin{align}\label{plan1}
 \pi_{T}\coloneqq ({\rm Id},T)_{\#}\mu   
\end{align}
where $({\rm Id},T)\colon D\rightarrow D\times D$ is given by 
$({\rm Id},T)(x)=(x,T(x))$. 
Here, $\Gamma(\mu,\nu)$ is the set of all couplings between $\mu$ and $\nu$, that is, the set of all Borel probability measures on $D\times D$ for which the marginal on the first variable is~$\mu$ and the marginal on the second variable is~$\nu$.

We notice that when~$\mu$ is absolutely continuous with respect to the Lebesgue measure, the weak-* convergence of~$\mu_{n}$ to~$\mu$ as $n\rightarrow \infty$ is equivalent to the existence of a sequence $\{T_{n}\}_{n\in \N}$ of transportation maps such that
\begin{align}\label{stagning}
\int_{D}\vert x-T_{n}(x)\vert^{p}\,\de\mu(x) \rightarrow 0 \qquad \text{as $n\rightarrow \infty$.}    
\end{align}
This motivates the following definition.
\begin{definition}\label{def_stagn}
We say that a sequence of transportation maps $\{T_{n}\}_{n\in\N}$ is \emph{stagnating} if 
\eqref{stagning} holds.
\end{definition}
Given $p\in[1,\infty)$ and $(\mu,f),(\nu,g)\in TL^{p}(D)$ their distance is defined by
\begin{align}\label{minimi}
d_{TL^{p}}\big((\mu,f),(\nu,g)\big)\coloneqq 
\inf_{\pi\in \Gamma(\mu,\nu)}  \iint_{D\times D}  (\vert x-y \vert^{p} + \vert f(x)-g(x)\vert^{p}) \,\de\pi(x,y)
\end{align}
It is well known \cite{amb2005,santambrogio2015} that when~$\mu$ is absolutely with respect to the Lebesgue measure, problem \eqref{minimi} admits a solution. In particular, if $p>1$ there exists a unique transport map~$T$ inducing the transport plan~$\pi_T$ in \eqref{plan1}. 

The following proposition characterized the convergence in $TL^p$.
\begin{proposition}[{\cite[proposition 3.12]{garcia2016}}]\label{prop2.7}
Let $(\mu,f)\in TL^{p}(D)$ and let $\{(\mu_{n},f_{n})\}_{n\in\N}$ be a sequence in $TL^{p}$. The following statements are equivalent:
\begin{enumerate}
    \item[1.] $(\mu_{n},f_{n})$ converges to $(\mu,f)$ in $TL^p(D)$ as $n\to\infty$, in symbols $(\mu_{n},f_{n})\stackrel{TL^{p}}{\longrightarrow} (\mu,f)$ as $n\rightarrow \infty$. 
    \item[2.] $\mu_{n}\weak\mu$ weakly$^{\ast}$ and for every stagnating sequence of transportation plans $\{\pi_{n}\}_{n\in\N}\subseteq \Gamma(\mu,\mu_{n})$
    \begin{align}\label{stagn1}
        \iint_{D\times D} \vert f(x)-f(y) \vert^{p} \, \de \pi_{n}(x,y) \rightarrow 0, \quad \text{as $n\rightarrow \infty$.}
    \end{align}
    \item[3.] $\mu_{n}\weak \mu$ weakly$^{\ast}$ and there exists a stagnating sequence of transportation plans $\{\pi_{n}\}_{n\in\N}\subseteq \Gamma(\mu,\mu_{n})$ for which \eqref{stagn1} holds.
\end{enumerate}
Moreover, if the measure $\mu$ is absolutely continuous with respect to the Lebesgue measure, the following are equivalent to the previous statements:
\begin{enumerate}
    \item[4.]$\mu_{n}\weak \mu$ weakly$^{\ast}$ and there exists a stagnating sequence of transportation maps $\{T_{n}\}_{n\in\N}$ such that
    \begin{align}\label{stagn2}
     \int_{D}\vert f(x)-f_{n}(T_{n}(x)) \vert^{p}\,\de\mu(x)\rightarrow 0, \quad \text{as $n\rightarrow \infty$;}    
    \end{align}
    \item[5.] $\mu_{n}\weak \mu$ weakly$^{\ast}$ and for any stagnating sequence of transportation maps $\{T_{n}\}_{n\in\N}$ \eqref{stagn2} holds.
\end{enumerate}
\end{proposition}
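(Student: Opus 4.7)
The plan is to prove the chain $(1)\Leftrightarrow(3)\Leftrightarrow(2)$ first, and then derive $(4)\Leftrightarrow(5)\Leftrightarrow(3)$ under the absolute continuity hypothesis. The main tools are the existence of optimal couplings for the Kantorovich problem on the compact set $\ov D$, the glueing lemma for probability measures on $D^3$, and a density argument approximating $L^p$ functions by continuous ones.

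For $(1)\Rightarrow(3)$ I would select nearly optimal couplings $\pi_n\in\Gamma(\mu,\mu_n)$ such that
$$\iint_{D\times D}\bigl(|x-y|^p+|f(x)-f_n(y)|^p\bigr)\,\de\pi_n(x,y)\le d_{TL^p}\bigl((\mu,f),(\mu_n,f_n)\bigr)^p+\tfrac{1}{n}.$$
The first term forces $\{\pi_n\}_{n\in\N}$ to be stagnating and, by projecting to the second marginal, yields $\mu_n\weak\mu$ weakly$^*$; the second term is exactly \eqref{stagn1}. Conversely $(3)\Rightarrow(1)$ is immediate by plugging any $\pi_n$ satisfying \eqref{stagn1} into the infimum defining $d_{TL^p}$, and $(2)\Rightarrow(3)$ is also straightforward, since $\mu_n\weak\mu$ guarantees the existence of at least one stagnating sequence of plans, e.g., the $p$-Wasserstein optimal ones.

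The delicate step is $(3)\Rightarrow(2)$. Given a ``good'' stagnating sequence $\pi_n^*$ satisfying \eqref{stagn1} and an arbitrary stagnating $\tilde\pi_n\in\Gamma(\mu,\mu_n)$, I would invoke the glueing lemma to produce $\sigma_n\in\cP(D^3)$ whose $(1,2)$-marginal equals $\pi_n^*$ and whose $(1,3)$-marginal equals $\tilde\pi_n$. A triangle inequality in $\ell^p$ gives
$$\iint|f(x)-f_n(z)|^p\,\de\tilde\pi_n(x,z)\le 2^{p-1}\bigg[\iint|f(x)-f_n(y)|^p\,\de\pi_n^*(x,y)+\int|f_n(y)-f_n(z)|^p\,\de\sigma_n\bigg].$$
The first term vanishes by \eqref{stagn1}; for the second, the $(2,3)$-marginal $\gamma_n$ of $\sigma_n$ is a coupling of $\mu_n$ with itself satisfying $\int|y-z|^p\,\de\gamma_n\to 0$ by the stagnation estimates for $\pi_n^*$ and $\tilde\pi_n$. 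The main obstacle is showing that $\int|f_n(y)-f_n(z)|^p\,\de\gamma_n\to 0$, given that both $f_n$ and $\mu_n$ vary with $n$: I would lift a continuous approximation $\varphi\in C_c(D)$ of $f$ through $\pi_n^*$ to obtain a function in $L^p_{\mu_n}$ close to $f_n$, use the uniform continuity of $\varphi$ to bound $|\varphi(y)-\varphi(z)|^p$ by $|y-z|^p$, and absorb the approximation error at the end via the triangle inequality.

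Under the hypothesis $\mu\ll\L^d$, transportation maps $T_n$ with $(T_n)_\#\mu=\mu_n$ exist (e.g., by Brenier's theorem), so stagnating sequences of maps exist whenever $\mu_n\weak\mu$. For any such $T_n$, the plan $\pi_{T_n}=(\mathrm{id},T_n)_\#\mu$ from \eqref{plan1} is a stagnating plan, and the change of variables formula gives
$$\iint_{D\times D}|f(x)-f_n(y)|^p\,\de\pi_{T_n}(x,y)=\int_D|f(x)-f_n(T_n(x))|^p\,\de\mu(x).$$
Thus $(2)\Rightarrow(5)$ and $(4)\Rightarrow(3)$ are immediate, $(5)\Rightarrow(4)$ uses the existence of at least one stagnating sequence of maps (from $\mu\ll\L^d$), and $(3)\Rightarrow(4)$ follows by combining $(3)\Rightarrow(2)\Rightarrow(5)\Rightarrow(4)$, closing the equivalence.
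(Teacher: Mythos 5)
The paper does not prove this proposition: it is quoted verbatim from \cite[Proposition~3.12]{garcia2016} and used as a black box, so there is no in-paper argument to compare against. Your proposal correctly reconstructs the standard proof from that reference: the equivalences $(1)\Leftrightarrow(3)$ and $(2)\Rightarrow(3)$ are indeed immediate from the definition of $d_{TL^p}$ and from the fact that on a bounded domain weak$^*$ convergence yields stagnating (e.g.\ optimal) plans, and your treatment of the only delicate implication $(3)\Rightarrow(2)$ -- glueing $\pi_n^*$ and $\tilde\pi_n$ along their common first marginal, splitting $|f(x)-f_n(z)|$ by the triangle inequality, and controlling $\int|f_n(y)-f_n(z)|^p\,\de\gamma_n$ by transferring a $C_c(D)$ approximation of $f$ to an $L^p_{\mu_n}$ approximation of $f_n$ through the coupling -- is exactly the right mechanism; the passage to maps under $\mu\ll\L^d$ via $\pi_{T_n}=(\mathrm{Id},T_n)_\#\mu$ and the change of variables closes the loop as you describe. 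The only points to make explicit in a full write-up are (i) that for the middle term one uses the modulus of continuity of $\varphi$ together with the concentration of $\gamma_n$ near the diagonal (rather than a literal bound by $|y-z|^p$), and (ii) that for $p=1$ the existence of stagnating maps follows, e.g., from the $p=2$ optimal maps and H\"older's inequality on the bounded set $D$; neither affects the correctness of the outline.
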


Let $\nu=\rho\L^{d}$ with the density $\rho$ bounded from below and from above by positive constants. 
We consider a sequence $\{X_{i}\}_{i\in\N}$ of independent 
random variables which are identically distributed according to~$\nu$, and supported on a common probability space $(\Omega,\T,\P)$. 
In the next, we make use of upper bounds on the transportation distance between~$\nu$ and the empirical measure $\nu_{n}\coloneqq \frac{1}{n}\sum_{i=1}^{n}\delta_{X_{i}}$. 
This will play an important role in the proof of our $\Gamma$-convergence results. 
In particular, we will use estimates of
\begin{align*}
d_{\infty}(\nu,\nu_{n})\coloneqq\inf\big\{\norma{{\rm Id}-T}_{\infty} \text{ where $T\colon D\rightarrow D$, $T_{\#}\nu=\nu_{n}$}\big\}
\end{align*}
which measures what is the least maximal distance that a transportation map~$T$ between~$\nu$ and~$\nu_{n}$ has to move the mass. 
Here, for a map $S\colon D\to D$, we denote by $\lVert S\rVert_{\infty}\coloneqq \sup_{x\in D}|S(x)|$.
In particular, we are interested in the case where~$\rho$ is a uniform probability density in~$D$. 
Let us suppose that $D=(0,1)^{d}$, and consider $P=\{p_{i},\ldots, p_{n}\}$ the set of~$n$ points in~$D$ where each point in~$P$ is the center of a cube contained in~$D$ with volume~$\frac{1}{n}$. In \cite{leighton1989} it is shown that, if~$D$ is connected with Lipschitz boundary, for $d\geq 3$ and $\rho$ constant, there exist two positive constants $\lambda, \Lambda$ such that the event
\begin{align*}
\frac{\lambda (\log n)^{1/d}}{n^{1/d}}\leq \min_{\sigma}\max_{i\in\{1,\ldots,n\}}\vert p_{i}-X_{\sigma(i)}\vert \leq \frac{\Lambda (\log n)^{1/d}}{n^{1/d}}
\end{align*} 
has probability $1$ and where $\sigma$ ranges over all permutations of $\{1,\ldots,n\}$. This result was further extended in \cite{trillos2015}, and we report it here.
\begin{theorem}[{\cite[Theorem 2.5]{garcia2016}}]\label{trillos15}
Let~$D\subset\R^d$ be an bounded connected open set with Lipschitz boundary. 
Let~$\nu$ be a probability measure on~$D$ with density~$\rho$ which is bounded from below and from above by positive constants. 
Let $\{X_{i}\}_{n\in\N}$ be a sequence of independent and identically distributed random variables distributed on~$D$ according to the measure~$\nu$ and let $\nu_{n}\coloneqq \frac{1}{n}\sum_{i=1}^{n}\delta_{X_{i}}$. 
Then there exists a constant $C>0$ such that for $\P$-almost everywhere $\omega\in\Omega$ there exists a sequence of transportation maps $\{T_{n}\}_{n\in\N}$ from $\nu$ to $\nu_{n}$ (i.e., $(T_{n})_{\#}\nu=\nu_{n}$) and such that
\begin{align}\label{estimaL}
\begin{aligned}
&\text{if $d=2$ then} \quad \limsup_{n\in\N}\frac{n^{1/2}\norma{{\rm Id}-T_{n}}_{\infty}}{(\log n)^{3/4}}\leq C,\\
&\text{if $d\geq 3$ then} \quad \limsup_{n\in\N}\frac{n^{1/d}\norma{{\rm Id}-T_{n}}_{\infty}}{(\log n)^{1/d}}\leq C.
\end{aligned}
\end{align}
\end{theorem}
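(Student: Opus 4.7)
The plan is to reduce the statement to two classical ingredients: a pointwise bound on the $\infty$-matching distance between the empirical measure $\nu_n$ and the reference measure $\nu$ in the regime of large $n$, and a construction that turns such a point-matching into a genuine Borel transportation map with controlled $L^\infty$ displacement.

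First I would handle the uniform case $\nu=\L^d\lfloor_D$ on $D=(0,1)^d$. Partition $D$ into $n$ axis-aligned cubes $Q_1,\dots,Q_n$, each with Lebesgue measure $1/n$ and diameter $O(n^{-1/d})$; let $p_i$ be the center of $Q_i$. The Leighton--Shor bound recalled immediately before the statement gives $\P$-a.s., for $n$ large,
\begin{equation*}
\min_\sigma \max_{i}|p_i-X_{\sigma(i)}| \leq \frac{\Lambda(\log n)^{1/d}}{n^{1/d}}\qquad (d\geq 3),
\end{equation*}
with the corresponding Ajtai--Koml\'os--Tusn\'ady type bound with rate $(\log n)^{3/4}/n^{1/2}$ for $d=2$. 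Fix for each $n$ an optimal permutation $\sigma_n$, and define $T_n\colon D\to D$ by
\begin{equation*}
T_n(x) \coloneqq X_{\sigma_n(i)} \qquad \text{if } x\in Q_i.
\end{equation*}
Then $(T_n)_\#\nu=\nu_n$ (by equidistribution of mass) and for $x\in Q_i$
\begin{equation*}
|x-T_n(x)| \leq |x-p_i| + |p_i - X_{\sigma_n(i)}| \leq \mathrm{diam}(Q_i) + \min_\sigma \max_j|p_j-X_{\sigma(j)}|.
\end{equation*}
Both summands are of the same order $(\log n)^{1/d}/n^{1/d}$ (resp.\ $(\log n)^{3/4}/n^{1/2}$), which yields the claimed bound.

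Next, to pass from the uniform measure on a cube to $\nu=\rho\L^d$ on a general Lipschitz domain with $\rho$ bounded between positive constants $m$ and $M$, I would partition $D$ into $n$ Borel cells $Q_i^\nu$ of $\nu$-mass $1/n$, chosen so that $\mathrm{diam}(Q_i^\nu)\lesssim n^{-1/d}$: this is possible because the bounds $m\leq\rho\leq M$ ensure that a uniform-mesh partition of $D$ into cubes of side length $\sim n^{-1/d}/m^{1/d}$, merged into groups of prescribed $\nu$-mass $1/n$, has the required diameter control, using that $D$ is connected with Lipschitz boundary so that neighboring cubes can be joined into cells without inflating the diameter. The empirical matching bounds of Leighton--Shor and AKT extend to this setting (this is precisely the content of~\cite{trillos2015}) with the same rates up to a constant depending on $m$, $M$, and the Lipschitz constant of $\partial D$. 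Then I would repeat the construction: pick $p_i\in Q_i^\nu$, take an optimal permutation $\sigma_n$, and set $T_n(x)=X_{\sigma_n(i)}$ on $Q_i^\nu$, so that $(T_n)_\#\nu=\nu_n$ and $\|\mathrm{Id}-T_n\|_\infty \leq \max_i\mathrm{diam}(Q_i^\nu)+\min_\sigma\max_j|p_j-X_{\sigma(j)}|$.

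Finally, to promote the probabilistic statement to an almost-sure $\limsup$, I would use a Borel--Cantelli argument. The matching-bound results cited above actually yield, for every $\tau>1$, a constant $C=C(\tau,d,\rho,D)$ such that
\begin{equation*}
\P\!\left(\min_\sigma\max_i|p_i-X_{\sigma(i)}| > C\,\frac{(\log n)^{\alpha_d}}{n^{1/d}}\right) \leq n^{-\tau},
\end{equation*}
with $\alpha_2=3/4$ and $\alpha_d=1/d$ for $d\geq 3$; summability of $n^{-\tau}$ gives the a.s.\ upper bound on $\limsup_n n^{1/d}\|\mathrm{Id}-T_n\|_\infty/(\log n)^{\alpha_d}$. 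The main obstacle is the dimension-two case: obtaining the $(\log n)^{3/4}$ rate (as opposed to the naive $(\log n)^{1/2}$) requires the AKT multiscale dyadic analysis that feeds into~\cite{trillos2015}, which is considerably more delicate than the pigeonhole-type proof of the $d\geq 3$ Leighton--Shor bound; the remaining steps, namely the passage from matching to a measurable transport map and the reduction to uniform measure on a cube, are comparatively routine.
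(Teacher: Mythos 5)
The paper offers no proof of this statement to compare against: it is quoted verbatim from \cite[Theorem~2.5]{garcia2016}, whose proof is in \cite{trillos2015}, and is used as a black box. Judged as a standalone argument, your proposal has a genuine gap at its decisive step. The soft layer is fine: cells of $\nu$-mass exactly $1/n$ mapped each to one sample point via an optimal permutation do give a Borel map with $(T_n)_\#\nu=\nu_n$ and $\lVert \mathrm{Id}-T_n\rVert_\infty\leq \max_i\mathrm{diam}(Q_i^\nu)+\min_\sigma\max_i|p_i-X_{\sigma(i)}|$, and a Borel--Cantelli argument upgrades polynomially-decaying failure probabilities to the almost-sure $\limsup$ in \eqref{estimaL}. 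But the quantitative core --- an almost-sure matching bound at rate $(\log n)^{1/d}n^{-1/d}$ (resp.\ $(\log n)^{3/4}n^{-1/2}$ for $d=2$) for an i.i.d.\ sample from a \emph{general} density bounded above and below on a \emph{general} bounded connected Lipschitz domain --- is exactly the content of the theorem: a transport map with small $\lVert \mathrm{Id}-T_n\rVert_\infty$ and such a matching are essentially equivalent (preimages of the atoms yield the matching). Invoking it with ``this is precisely the content of \cite{trillos2015}'' makes the argument circular rather than a proof.

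Relatedly, the claim that passing from the uniform cube to general $(\rho,D)$ is ``comparatively routine'' understates where the work lies. The results of \cite{leighton1989} and Ajtai--Koml\'os--Tusn\'ady bound matchings of \emph{uniform} samples against \emph{grid} points; there is no off-the-shelf bound for matching a non-uniform sample against arbitrary centers of cells of $\nu$-mass $1/n$, and the number of sample points falling in each cell fluctuates, so a single optimal permutation against fixed cell representatives cannot be controlled without redoing the analysis. In \cite{trillos2015} this is handled by a multiscale (dyadic) transport scheme in which the random discrepancies of points per dyadic cell are controlled by Chernoff-type estimates and summed over scales, together with a decomposition-and-gluing argument exploiting connectedness and the Lipschitz boundary to reduce a general domain to cubes; the $d=2$ rate additionally needs the AKT analysis, as you note. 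Two smaller points: a partition of $(0,1)^d$ into $n$ congruent cubes of volume $1/n$ exists only when $n$ is a $d$-th power (one should use boxes of bounded aspect ratio), and the construction of cells of exact $\nu$-mass $1/n$ with diameter $O(n^{-1/d})$ near $\partial D$ by ``merging'' grid cubes requires an actual argument (merged cells must be re-split to hit the exact mass, and boundary slivers must be absorbed using the Lipschitz condition). If you intend the statement as a citation, as the paper does, say so explicitly; if you intend a self-contained proof, the multiscale estimate is the part that must be supplied.
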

\begin{remark}\label{iofareiunremark}
As a consequence of \eqref{estimaL}, we have that $\norma{{\rm Id}-T_{n}}_{\infty}\to0$. 
In particular, the sequence $\{T_n\}_{n\in\N}$ is stagnating according to Definition~\ref{def_stagn}.
\end{remark}

\subsection{Weighted {\rm BV} functions}\label{sec_wBV}
We now recall the notion of total variation and weighted total variation which will be used to state our main result. 
Let $\psi\colon D\rightarrow (0,+\infty)$ be a continuous function, and consider the measure $\nu=\psi \L^{d}$.  
Following \cite{baldi2001}, given $u\in L^{1}_\nu(D)$ we define the weighted total variation of~$u$ with respect to~$\psi$ as
\begin{equation*}
TV(u;\psi)\coloneqq\sup\bigg\{\int_{D}u \div  \phi \,\de x:  \phi\in C_{c}^{\infty}(D;\R^{d}),\vert\phi(x) \vert\leq \psi(x)  \text{ for all $x\in D$} \bigg\}.
\end{equation*}
We denote $\BV(D;\psi)$ the set of all functions $u\in L^{1}(D;\psi)$ for which $TV(u;\psi)<+\infty$. 
In particular, when $\psi\equiv 1$ we recover the usual space~$\BV(D)$. 
For a measurable set $B\subset D$, we then define the perimeter in~$D$ as the weighted total variation of the characteristic function of~$B$, that is, $\PR(B;\psi) \coloneqq TV(\chi_{B};\psi)$. 
\begin{proposition}[{\cite[Theorem 3.3]{baldi2001}}]
A function $u\in L^{1}(D;\psi)$ belongs to~$\BV(D;\psi)$ if and only if there exist a finite Radon measure $\vert Du\vert_{\psi}$ and a $\vert Du\vert_{\psi}$-measurable function $\sigma\colon D\rightarrow \R^{d}$ such that $\vert \sigma(x)\vert=1$ for $\vert Du\vert_{\psi}$-almost every~$x\in D$ and such that
\begin{align}\label{divthm}
\int_{D} u(x) \div\phi(x)\,\de x=-\int_{D}\frac{\phi(x)\cdot \sigma(x)}{\psi(x)}\,\de\vert Du\vert_{\psi}(x).
\end{align}
The measure~$\vert Du\vert_{\psi}$ and the function~$\sigma$ are uniquely determined by \eqref{divthm} and the weighted total variation $TV(u;\psi)$ is equal to $\vert Du\vert_{\psi}(D)$.
\end{proposition}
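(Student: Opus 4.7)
The statement is a weighted Riesz-representation theorem, and my strategy mirrors the classical proof for (unweighted) $\BV$. The forward direction is the substantive content; sufficiency and the equality $TV(u;\psi)=|Du|_\psi(D)$ are essentially bookkeeping once the representation is established.

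First, I would encode $TV(u;\psi)$ as the norm of a linear functional. Define $L\colon C_c^\infty(D;\R^d)\to\R$ by $L(\phi)=\int_D u\div\phi\,\de x$. By the very definition of $TV(u;\psi)$, for any $\phi\in C_c^\infty(D;\R^d)$ with $|\phi|\le\psi$ one has $|L(\phi)|\le TV(u;\psi)$; by homogeneity, for a general test field $\phi$ one has $|L(\phi)|\le TV(u;\psi)\,\|\phi/\psi\|_\infty$, where $\|\phi/\psi\|_\infty<\infty$ because $\psi$ is continuous and strictly positive on the compact set $\spt\phi$. Hence $L$ is continuous with respect to the weighted sup-norm $\|\phi\|_\psi\coloneqq\sup_{x\in D}|\phi(x)|/\psi(x)$, and extends uniquely to a bounded linear functional on the closure of $C_c^\infty(D;\R^d)$ in this norm, which (by mollification and the fact that $\psi$ is continuous and bounded away from zero on compacta) is $C_c(D;\R^d)$ with the same norm.

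Next, I would turn this into a standard Riesz problem by a change of variable. The map $\Phi\colon(C_c(D;\R^d),\|\cdot\|_\infty)\to(C_c(D;\R^d),\|\cdot\|_\psi)$, $\tilde\phi\mapsto\psi\tilde\phi$, is a linear isometric isomorphism. Therefore $\tilde L\coloneqq L\circ\Phi$ is a bounded linear functional on $(C_c(D;\R^d),\|\cdot\|_\infty)$ of norm $\le TV(u;\psi)$. The Riesz representation theorem for vector-valued measures then furnishes a finite $\R^d$-valued Radon measure $\mu$ on $D$ with $|\mu|(D)\le TV(u;\psi)$ and
\begin{equation*}
\tilde L(\tilde\phi)=\int_D \tilde\phi\cdot \de\mu,\qquad \tilde\phi\in C_c(D;\R^d).
\end{equation*}
Polar decomposition gives $\mu=\sigma\,|\mu|$ with a Borel $\sigma\colon D\to\R^d$ satisfying $|\sigma|=1$ for $|\mu|$-a.e.\ $x$. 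Setting $|Du|_\psi\coloneqq|\mu|$ and replacing $\sigma$ by $-\sigma$ to match the sign in \eqref{divthm}, then substituting $\tilde\phi=\phi/\psi$ (which lies in $C_c(D;\R^d)$ because $\psi>0$ is continuous) yields, for every $\phi\in C_c^\infty(D;\R^d)$,
\begin{equation*}
\int_D u\div\phi\,\de x = L(\phi)=\tilde L(\phi/\psi)=-\int_D \frac{\phi(x)\cdot\sigma(x)}{\psi(x)}\,\de|Du|_\psi(x),
\end{equation*}
which is \eqref{divthm}.

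Finally, I would close the loop. The inequality $|Du|_\psi(D)\le TV(u;\psi)$ is built in; the reverse follows by testing \eqref{divthm} against admissible $\phi\in C_c^\infty(D;\R^d)$ with $|\phi|\le\psi$ (so that $|\phi\cdot\sigma|/\psi\le 1$) and taking the supremum. Sufficiency is immediate: if \eqref{divthm} holds for some finite measure and unit vector field, then $TV(u;\psi)\le|Du|_\psi(D)<\infty$, so $u\in\BV(D;\psi)$. Uniqueness of the pair $(|Du|_\psi,\sigma)$ is standard: two representations would force the $\R^d$-valued measures $\sigma|Du|_\psi$ and $\sigma'|Du|'_\psi$ to agree on $C_c(D;\R^d)$, hence as vector Radon measures, and taking total variations (using $|\sigma|=|\sigma'|=1$) gives equality of the scalar measures, and then of the unit directions $|Du|_\psi$-a.e. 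The only mildly delicate point I expect is the density step extending $L$ from $C_c^\infty$ to $C_c$ in the weighted norm $\|\cdot\|_\psi$; this is handled by standard mollification together with the local bounds on $\psi$ on the supports of the test fields.
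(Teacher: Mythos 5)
The paper does not prove this proposition: it is quoted verbatim from \cite[Theorem~3.3]{baldi2001}, so there is no in-text argument to compare against. Your proof is correct and is precisely the standard route (and, as far as I can tell, the one in Baldi's paper): bound the distributional divergence functional in the weighted sup-norm, transport it by the isometry $\tilde\phi\mapsto\psi\tilde\phi$ to the unweighted setting, apply the Riesz representation theorem for vector-valued Radon measures, and use the polar decomposition; the equality $TV(u;\psi)=\vert Du\vert_\psi(D)$ and uniqueness then follow exactly as you say. The only imprecision is harmless: the closure of $C_c^\infty(D;\R^d)$ in the norm $\|\cdot\|_\psi$ is not $C_c(D;\R^d)$ (which is not complete) but the corresponding $C_0$-type space; what you actually need, and do justify via mollification and the local lower bound on $\psi$, is only that $C_c^\infty$ is $\|\cdot\|_\psi$-dense in $C_c$, after which Riesz applies to the bounded functional on $C_c$.
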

Note that, using \eqref{divthm}, one can check that $\vert Du\vert_{\psi}= \psi \vert Du\vert$, so that
\begin{align}\label{formtv}
TV(u;\psi)= \int_{D}\psi(x) \,\de \vert Du\vert(x).
\end{align}
Since the functional $TV(\cdot;\psi)$ is defined as the supremum of linear continuous functionals in $L^{1}(D;\psi)$, 
it is lower semicontinuous with respect to the $L^{1}(D;\psi)$ metric.  
The following density theorem for weighted {\rm BV} functions will be used in the proof of Theorem \ref{ourgamma} below.
\begin{theorem}[{\cite[Theorem 2.4]{trillos2015}}]\label{densbv}
Let $D$ be an open subset of $\R^{d}$ with Lipschitz boundary and let $\rho\colon D\rightarrow \R$ be a continuous function which is bounded from below and above by positive constants. 
Then for every $u\in {\rm BV}(D;\rho)$ there exists a sequence $\{u_{n}\}_{n\in\N}\subseteq C_{c}^{\infty}(\R^{d})$ such that $u_{n}\rightarrow u$ in $L^{1}(D)$ and $\int_{D}\vert\nabla u_{n} \vert \rho \,\de x \rightarrow TV(u;\rho)$ as $n\rightarrow \infty$.
\end{theorem}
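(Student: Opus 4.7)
The strategy is to combine a classical smooth approximation theorem for $BV$ functions with the Reshetnyak continuity theorem, exploiting the continuity of the weight $\rho$ and the Lipschitz regularity of $\partial D$ to pass from the unweighted to the weighted setting and from $C^\infty(\bar D)$ to $C_c^\infty(\R^d)$.

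First, since $\rho$ is bounded above and below by positive constants, $BV(D;\rho)$ and $BV(D)$ coincide as sets. By the standard Meyers--Serrin--Anzellotti--Giaquinta density theorem on Lipschitz domains, there exists a sequence $\hat u_n \in C^\infty(\bar D)$ with $\hat u_n \to u$ in $L^1(D)$ and $|D\hat u_n|(D) \to |Du|(D)$, that is, strict convergence in $BV(D)$. I would then extend each $\hat u_n$ to a function $u_n \in C_c^\infty(\R^d)$ that agrees with $\hat u_n$ on $\bar D$: this is possible via a smooth extension operator (available thanks to the Lipschitz regularity of $\partial D$) followed by multiplication by a smooth cutoff equal to $1$ on $\bar D$. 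Such a modification preserves the $L^1(D)$-convergence and leaves $\int_D |\nabla u_n|\,\rho\,\de x$ unchanged with respect to $\int_D |\nabla \hat u_n|\,\rho\,\de x$.

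It then remains to prove $\int_D |\nabla \hat u_n|\,\rho\,\de x \to TV(u;\rho)$. Since $\hat u_n$ is smooth, one has $|D\hat u_n| = |\nabla \hat u_n|\,\L^d$ on $D$, and by \eqref{formtv} the target equals $\int_D \rho\,\de|Du|$. The strict convergence $\hat u_n \to u$ in $BV(D)$ yields both the weak-$*$ convergence $|D\hat u_n| \weakstar |Du|$ as finite Radon measures on $\bar D$ and the convergence of total masses; by the Reshetnyak continuity theorem this implies $\int_D \varphi\,\de|D\hat u_n| \to \int_D \varphi\,\de|Du|$ for every bounded continuous $\varphi$ on $\bar D$. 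Applying this with $\varphi = \rho$ (continuous on $D$ and, thanks to the bounds from above and below, extendable continuously and boundedly to $\bar D$, up to a harmless perturbation) yields the desired convergence. The matching liminf inequality is automatic from the $L^1$-lower semicontinuity of $TV(\cdot;\rho)$, so the constructed sequence is in fact a recovery sequence in the strict sense.

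The main obstacle will be establishing the Meyers--Serrin-type approximation with smoothness \emph{up to the boundary}: this uses the Lipschitz assumption on $\partial D$ via local flattening charts and reflection, combined with interior mollification patched by a partition of unity. A secondary subtlety is that the passage from unweighted to weighted strict convergence really needs $\rho$ to be \emph{continuous} and not merely bounded: without continuity the Reshetnyak theorem does not apply and uniform approximation of $\rho$ by simple functions cannot preserve the total variation identity. Once both ingredients are in place, the proof concludes quickly via the diagonal extraction from the two-parameter family indexed by the $BV$-approximation and the boundary extension.
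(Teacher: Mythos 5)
The paper does not prove Theorem~\ref{densbv} at all: it is quoted verbatim from the literature (\cite[Theorem~2.4]{trillos2015}) and used as a black box, so there is no in-paper argument to compare against. Judged on its own, your reconstruction is essentially sound: the reduction $\BV(D;\rho)=\BV(D)$ from the two-sided bounds on $\rho$, the passage from strict convergence of the gradient measures to convergence of $\int_D\rho\,\de|D\hat u_n|$ via Reshetnyak (or, more simply, via the fact that weak-$*$ convergence of nonnegative measures plus convergence of total masses upgrades to convergence against all bounded continuous functions on $D$ --- which also spares you the unnecessary worry about extending $\rho$ to $\bar D$), and the identification via \eqref{formtv} of the limit with $TV(u;\rho)$ are all correct. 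The one genuinely load-bearing step is the one you flag yourself: producing a \emph{strictly} convergent approximating sequence that is smooth up to $\partial D$, since plain Meyers--Serrin/Anzellotti--Giaquinta only gives $C^\infty(D)\cap W^{1,1}(D)$ and your attribution of boundary smoothness to that theorem is, as stated, too strong. The reflection-and-flattening route you sketch can be made to work for $\BV$ (one must check that the reflected function creates no jump mass on the flattened boundary), but the cleaner and more standard route --- and the one that lands directly in $C_c^\infty(\R^d)$ without a separate extension step --- is to use the $\BV$ extension theorem on Lipschitz domains to get $\bar u\in\BV(\R^d)$ with $|D\bar u|(\partial D)=0$, mollify globally, and cut off; then $|Du_n|(D)\to|Du|(D)$ follows from lower semicontinuity on $D$ together with the upper bound $|Du_n|(\bar D)\le|D\bar u|(D_\eps)\to|D\bar u|(\bar D)=|Du|(D)$, and the weighted convergence follows as above. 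With that substitution (or with the reflection step carried out carefully), your argument is complete.
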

For a general statement for density of regular functions in the space ${\rm BV}(D;\rho)$, we refer to \cite[Theorem 3.4]{baldi2001}.

\section{Setting of the problem and main results}\label{hompin}
In this section, we give a precise description of the functionals we are interested in and a precise statement of our main results.  
Let $X\coloneqq \{X_{i}\}_{i\in\N}$ be a sequence of independent and identically distributed random variables with distribution~$\nu$ which is absolutely continuous with respect to the Lebesgue measure in~$\R^{d}$, that is there exists a function $\rho\in L^{1}(D)$ such that $\nu=\rho\L^{d}$. 
Let $\kappa\colon \R^{d}\rightarrow [0,+\infty)$ be a 
function 
and, for any $\delta>0$, let $\kappa_{\delta}\colon \R^{d}\rightarrow [0,+\infty)$ be defined as
\begin{align*}
    \kappa_{\delta}(x)\coloneqq \frac{1}{\delta^{d}}\kappa\left(\frac{x}{\delta}\right).
\end{align*}
For 
$n\in \N$, we consider the non-ideal free energy functional  $GF_{n,\delta}\colon L^{1}_\nu(D)\rightarrow [0,+\infty]$ defined by
\begin{align}\label{tilde}
GF_{n,\delta}(u)\coloneqq
\begin{cases}
\displaystyle \frac{2}{\delta n^{2}}\sum_{i,j=1}^n\kappa_{\delta}(X_{i}-X_{j})(1-u(X_{i}))u(X_{j}) & \text{if $0\leq u \leq 1$},\\[3mm]
+\infty & \text{otherwise.}
\end{cases}
\end{align}

Our contribution in this work is to study the asymptotic behavior of $GF_{n,\delta_{n}}$, for a suitable scaling $\delta_{n}$ depending on the number of particles $n$.

The factor $2/\delta $ in~\eqref{tilde} comes from a modeling assumption, whereas the factor $1/n^2$ is a rescaling that keeps into account the number of particles. 
The scaling $\delta_n\coloneqq \delta(n)$ that we are going to consider is the one that will allow us to obtain a law of large numbers as $n\rightarrow \infty$. 
Our main result will be to show that the $\Gamma$-limit of $GF_{n,\delta_{n}}$ is $\vert Du\vert_{\rho^{2}}$ (see Section~\ref{sec_wBV}), and we prove this in Theorem \ref{ourgamma} below.

For $d>1$, we make the following assumptions which will be valid for the remainder of the paper.
\begin{enumerate}[({H}1)]
\item\label{H1} The set $D\subset\R^d$ is bounded, connected, and open and has Lipschitz boundary.
\item\label{H2} The kernel $\kappa\colon \R^{d}\rightarrow [0,+\infty)$ is isotropic and thus it can be written as $\kappa(x)=\eta(\vert x\vert)$, for a certain radial profile $\eta\colon [0,+\infty)\rightarrow [0,+\infty)$. We assume that~$\eta$ is non-increasing, continuous at $0$, and that $\eta(0)>0$, and also that 
\begin{align}\label{alberti}
\int_{0}^{+\infty}\eta(r)r^{d+1}\,\de r<+\infty.
\end{align}
\item\label{H3} $X=\{X_i\}_{i\in\N}$ is a sequence of independent and identically distributed random variables, distributed according to a probability distribution $\nu=\rho\L^{d}$ with Lipschitz continuous density $\rho\colon D\rightarrow \R$ which is bounded from below and above by positive constants~$a$ and~$b$, respectively.
Associated with~$X$, we consider the sequence of empirical measures
\begin{align}\label{misnu}
\nu_{n}\coloneqq \frac{1}{n}\sum_{i=1}^{n}\delta_{X_{i}}\,,
\end{align}
and by~$T_n$ the transportation maps such that $(T_n)_\#\nu=\nu_n$.
\item\label{H4} We let $\{\delta_{n}\}_{n\in\N}$ be an infinitesimal sequence of positive numbers satisfying
\begin{align}\label{delta's}
\begin{aligned}
  &\lim_{n\rightarrow \infty}\frac{(\log n)^{3/4}}{n^{1/2}\delta_n}=0 \qquad \text{if $d=2$,}\\
 &\lim_{n\rightarrow \infty}\frac{(\log n)^{1/d}}{n^{1/d}\delta_n}=0 \qquad \text{if $d\geq 3$.}
 \end{aligned}
\end{align}
\end{enumerate}
\begin{remark}\label{remark31}
Let us now give some comments about our assumptions. 
\begin{enumerate}
\item Hypothesis (H2) is physically motivated and it is needed to treat, among other problems, Ising spin systems on graphs \cite{alberti1998}. 
By elementary considerations, it is immediate to notice that there exists a step function $\eta_0\colon[0,+\infty)\to[0,+\infty)$ of the form
\begin{subequations}\label{eta_0}
\begin{equation}\label{eta_0a}
\eta_0(r)=\begin{cases}
A & \text{if $r< r_0$,}\\
0 & \text{if $r\geq r_0$,}
\end{cases}
\end{equation}
where $0<A<\eta(0)$ and $0<r_0<+\infty$, such that 
\begin{equation}\label{eta_0b}
\eta_0(r)\leq \eta(r)\qquad\text{for all $r\in[0,+\infty)$.}
\end{equation}
\end{subequations}
Clearly, $\eta_0$ satisfies \eqref{alberti}.
\item Hypothesis (H3) describes how each lipid molecule moves randomly in the environment independently of all other molecules. 
\item Hypothesis (H4) is crucial to obtain the compactness result in Lemma~\ref{compatt} below; in particular, the conditions \eqref{delta's} are quite technical and are related to upper bounds on the transportation distance between the empirical measure~\eqref{misnu} and the distribution~$\nu$. It is related to the connectedness property of the limiting graph obtained starting from a random graph with edges $X_{1},\ldots, X_{n}$ and edge weights $\kappa_{\delta_{n}}(X_{i}-X_{j})$ as $n\to\infty$. That is, the conditions \eqref{delta's} guarantee that the resulting graph is connected with probability $1$ as $n\rightarrow \infty$ (see \cite{gupta1999}).
\end{enumerate}
\end{remark}
\begin{lemma}(Compactness)\label{compatt}
Assume that (H\ref{H1}--\ref{H4}) hold true. Consider a sequence of functions $\{u_n\}_{n\in\N}$ such that $u_{n}\in L^{1}_{\nu_n}(D)$ for every $n\in\N$,  where~$\nu_{n}$ is given by~\eqref{misnu} and suppose that
\begin{equation}\label{HP_CPT}
\sup_{n\in \N}\norma{u_{n}}_{L^{1}_{\nu_n}(D)}<+\infty \qquad\text{and}\qquad
\sup_{n\in\N} GF_{n,\delta_{n}}(u_{n})<+\infty.
\end{equation}
Then $\{(\nu_n,u_{n})\}_{n\in\N}$ is relatively compact in $TL^{1}(D)$.
\end{lemma}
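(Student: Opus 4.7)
The plan is to transport the discrete functions $u_n$ back to the common reference measure $\nu$ and then to exploit the nonlocal structure of $GF_{n,\delta_n}$ to produce a uniform BV-type bound on the transported functions, which via Proposition~\ref{prop2.7} will yield compactness in $TL^1(D)$. First, I would let $\{T_n\}_{n\in\N}$ be the transportation maps provided by Theorem~\ref{trillos15}, so that $(T_n)_\#\nu=\nu_n$ and, by Remark~\ref{iofareiunremark}, the family $\{T_n\}_{n\in\N}$ is stagnating. Setting $\tilde u_n\coloneqq u_n\circ T_n\colon D\to[0,1]$, one has $\norma{\tilde u_n}_{L^1_\nu(D)}=\norma{u_n}_{L^1_{\nu_n}(D)}$ uniformly bounded, and $0\leq\tilde u_n\leq 1$ gives an $L^\infty$ bound. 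Since the strong law of large numbers yields $\nu_n\weakstar\nu$ $\P$-almost surely, Proposition~\ref{prop2.7}(4) reduces the problem to extracting a subsequence of $\{\tilde u_n\}_{n\in\N}$ converging in $L^1(D)$.

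Using $\nu_n=(T_n)_\#\nu$ and the symmetry of $\kappa$ to symmetrize in the pair $(x,y)$, I would rewrite
\begin{align*}
GF_{n,\delta_n}(u_n)=\frac{1}{\delta_n}\iint_{D\times D}\kappa_{\delta_n}\bigl(T_n(x)-T_n(y)\bigr)\bigl[\tilde u_n(x)+\tilde u_n(y)-2\tilde u_n(x)\tilde u_n(y)\bigr]\rho(x)\rho(y)\,\de x\,\de y.
\end{align*}
From the triangle inequality $|T_n(x)-T_n(y)|\leq|x-y|+2\norma{{\rm Id}-T_n}_\infty$, the monotonicity of $\eta$, and the pointwise minorant $\eta_0$ in~\eqref{eta_0}, one gets
\begin{equation*}
\kappa_{\delta_n}\bigl(T_n(x)-T_n(y)\bigr)\geq\frac{A}{\delta_n^d}\,\chi_{\{|x-y|<s_n\}},\qquad s_n\coloneqq r_0\delta_n-2\norma{{\rm Id}-T_n}_\infty,
\end{equation*}
where Theorem~\ref{trillos15} and hypothesis~(H\ref{H4}) guarantee that $s_n>0$ eventually and $s_n/\delta_n\to r_0$. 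Combining this with $\rho\geq a$ and the elementary identity $\alpha+\beta-2\alpha\beta=(1-\alpha)\beta+\alpha(1-\beta)\geq|\alpha-\beta|$, valid for $\alpha,\beta\in[0,1]$, one obtains the nonlocal bound
\begin{equation*}
\sup_{n\in\N}\frac{1}{\delta_n^{d+1}}\iint_{\{|x-y|<s_n\}}|\tilde u_n(x)-\tilde u_n(y)|\,\de x\,\de y<+\infty.
\end{equation*}

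The last step is to convert this mesoscopic estimate into $L^1$-translation continuity uniformly in~$n$. A standard averaging over $y\in B_{s_n/2}(x)\cap B_{s_n/2}(x+h)$, valid for $|h|\leq s_n/4$, gives $\int_{D_h}|\tilde u_n(x+h)-\tilde u_n(x)|\,\de x\leq Cs_n$; then decomposing a general shift $h$ as a sum $h_1+\cdots+h_k$ of $k\simeq|h|/s_n$ vectors of norm at most $s_n/4$ and applying the triangle inequality upgrades this to
$\int_{D_h}|\tilde u_n(x+h)-\tilde u_n(x)|\,\de x\leq C'|h|$
for every $h$ and every $n$ sufficiently large. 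Together with the $L^\infty$ bound, the Riesz--Fr\'echet--Kolmogorov theorem yields a subsequence $\tilde u_{n_k}\to u$ in $L^1(D)$, and Proposition~\ref{prop2.7}(4) delivers $(\nu_{n_k},u_{n_k})\to(\nu,u)$ in $TL^1(D)$. The main obstacle is precisely this scale-matching in the final step: hypothesis~(H\ref{H4}) is essential to ensure $s_n\asymp\delta_n$ so that the telescoping argument produces a translation bound linear in~$|h|$ rather than degenerating as $n\to\infty$, and the pointwise minorant $\eta_0$ is what converts the averaged interaction driven by $\kappa$ into a purely geometric local BV estimate on the pushed-forward $\tilde u_n$.
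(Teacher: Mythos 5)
Your proposal follows the paper's reduction step for step: transport $u_n$ back to the reference measure via the maps $T_n$ of Theorem~\ref{trillos15}, minorize $\eta$ by the step kernel $\eta_0$ of \eqref{eta_0}, absorb the displacement error $\norma{{\rm Id}-T_n}_\infty$ into a slightly shrunken scale $s_n$ with $s_n/\delta_n\to r_0$ thanks to (H\ref{H4}), and thereby obtain a uniform bound on the nonlocal quantity $\delta_n^{-(d+1)}\iint_{\{|x-y|<s_n\}}|\tilde u_n(x)-\tilde u_n(y)|\,\de x\,\de y$ (your symmetrization identity $\alpha+\beta-2\alpha\beta\geq|\alpha-\beta|$ for $\alpha,\beta\in[0,1]$ is correct and is the standard way to pass from the product form to the difference form). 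The only point where you genuinely diverge is the last step: the paper at this stage simply invokes the compactness theorem of Lussardi--Morini (cited as \cite[Theorem~3.1]{lussardi2013}, which rests on the Alberti--Bellettini compactness result) to conclude that $\{\tilde u_n\}_n$ is relatively compact in $L^1_\nu(D)$, whereas you reprove that fact from scratch via averaged translation estimates, chaining of short shifts, and the Riesz--Fr\'echet--Kolmogorov criterion. Your route is self-contained and makes transparent exactly where the scale matching $s_n\asymp\delta_n$ enters; its cost is that the chaining argument as written implicitly assumes the segment decomposition of $h$ stays inside $D$, so near $\partial D$ one must either work on the inner domains $D_h$ and use $|D\setminus D_h|\to0$ together with the $L^\infty$ bound, or use the Lipschitz extension property of $D$ from (H\ref{H1}) --- a standard but nontrivial point that the paper's citation silently handles. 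The concluding passage from $L^1$-compactness of $\tilde u_n=u_n\circ T_n$ to $TL^1$-compactness of $(\nu_n,u_n)$ via the stagnating maps and Proposition~\ref{prop2.7} is the same in both arguments.
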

We now state the main result of this paper. 
To this aim, we define 
\begin{align}\label{costant}
  \alpha_{d}\coloneqq \bigg(\fint_{\S^{d-1}}\vert \nm{z,e}\vert \,\de \H^{d-1}(z)\bigg){\cdot}\bigg(\int_{\R^{d}}\vert x\vert \kappa(x)\,\de x\bigg),
\end{align}    
where $e\in\S^{d-1}$ any unit vector (it is not difficult to see that the averaged integral is indeed independent of~$e$).
%
\begin{theorem}\label{ourgamma}
Assume that (H\ref{H1}--\ref{H4}) hold true. 
Then the functionals 
$GF_{n,\delta_n}\colon L^{1}_{\nu_n}(D)\rightarrow [0,+\infty]$ defined by \eqref{tilde} 
$\Gamma(TL^1(D))$-converge to the functional $\alpha_d TV(\cdot; \rho^{2})\colon L^1_\nu(D)\to[0,+\infty]$, where $\alpha_d$ is defined in \eqref{costant} and where, using \eqref{formtv},
\begin{align*}
TV(u;\rho^{2})=\begin{cases}
\displaystyle 
\int_{S_u} \rho^2(x)\,\de\cH^{d-1}(x)& 
	\begin{array}{l}
	\text{if $u\in {\rm BV}(D;\rho)$}\\
	\text{and $u(x)\in\{0,1\}$ for $\nu$-a.e.~$x\in D$,}
	\end{array}\\[4mm]
+\infty & \;\;\text{otherwise in $L^1_\nu(D)$.}
\end{cases}
\end{align*}
\end{theorem}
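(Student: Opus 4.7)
The plan is to use the stagnating transportation maps $T_n$ from Theorem~\ref{trillos15} to trade the discrete stochastic energy $GF_{n,\delta_n}$ for a deterministic non-local functional on $(D,\nu)$, and then invoke an Alberti-type $\Gamma$-convergence of such non-local functionals toward a weighted perimeter. Hypothesis~(H\ref{H4}) is calibrated precisely so that $\|{\rm Id}-T_n\|_\infty/\delta_n\to 0$, which is the rate needed to compare $\kappa_{\delta_n}(T_n(x)-T_n(y))$ with $\kappa_{\delta_n}(x-y)$ without destroying the singular scaling. Combined with the compactness Lemma~\ref{compatt}, the admissibility $0\le u_n\le 1$ together with the prefactor $1/\delta_n$ forces any limit $u$ of a bounded-energy sequence to be $\{0,1\}$-valued and to lie in $\BV(D;\rho)$, by the standard argument that $\iint\kappa_{\delta_n}(x-y)u(x)(1-u(y))\,\de x\,\de y$ tends to $\int u(1-u)\,\de x$ as $\delta_n\to 0$.

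For the $\Gamma$-$\liminf$ I take $(\nu_n,u_n)\to(\nu,u)$ in $TL^1(D)$ with $\sup_n GF_{n,\delta_n}(u_n)<+\infty$ and set $v_n\coloneqq u_n\circ T_n$. The push-forward identity $(T_n)_\#\nu=\nu_n$ yields
\begin{equation*}
GF_{n,\delta_n}(u_n)=\frac{2}{\delta_n}\iint_{D\times D}\kappa_{\delta_n}\bigl(T_n(x)-T_n(y)\bigr)(1-v_n(x))v_n(y)\rho(x)\rho(y)\,\de x\,\de y.
\end{equation*}
Since $\eta$ is non-increasing and $|T_n(x)-T_n(y)|\le |x-y|+2\|{\rm Id}-T_n\|_\infty$, I bound $\kappa_{\delta_n}(T_n(x)-T_n(y))$ from below by $\kappa_{\bar\delta_n}(x-y)$ for some $\bar\delta_n$ with $\bar\delta_n/\delta_n\to 1$; a symmetrization in $(x,y)$ together with the fact that $u\in\{0,1\}$ then reduces the problem to the classical Alberti-type $\Gamma$-liminf for $\bar\delta_n^{-1}\iint\kappa_{\bar\delta_n}(x-y)|v_n(x)-v_n(y)|\rho(x)\rho(y)\,\de x\,\de y$, whose $\liminf$ is $\alpha_d\int_{S_u}\rho^2\,\de\H^{d-1}$. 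Stagnation of $T_n$ (Remark~\ref{iofareiunremark}) gives $v_n\to u$ in $L^1_\nu(D)$ and closes the inequality.

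For the $\Gamma$-$\limsup$, given $u=\chi_E\in\BV(D;\rho)$ I construct the recovery sequence as $u_n\coloneqq u\circ T_n^{-1}$, interpreted via a measurable selection of preimages so that $u_n\circ T_n=u$ $\nu$-a.e. Then $u_n$ is still $\{0,1\}$-valued, $(\nu_n,u_n)\to(\nu,u)$ in $TL^1(D)$, and the push-forward identity gives the same integral representation with $(1-u(x))u(y)$ replacing the $v_n$-product. Using the reverse kernel sandwich (based again on the monotonicity of $\eta$) together with the change of variables $y=x+\delta_n z$ and polar coordinates produces the sharp factor $\fint_{\S^{d-1}}\vert\nm{z,e}\vert\,\de\H^{d-1}(z)\cdot\int_{\R^d}|x|\kappa(x)\,\de x=\alpha_d$, while Lipschitz continuity of $\rho$ collapses $\rho(x)\rho(y)$ to $\rho^2(x)$ on $S_u$; a preliminary smoothing via Theorem~\ref{densbv} and a diagonal extraction handle the general case if needed.

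The principal obstacle is the kernel perturbation: rigorously showing that the stochastic kernel $\kappa_{\delta_n}(T_n(x)-T_n(y))$ can be replaced by $\kappa_{\delta_n}(x-y)$ with the same sharp constant $\alpha_d$ on both sides of the $\Gamma$-inequality. This aspect is genuinely new compared with the deterministic Alberti setting: the monotonicity of $\eta$ produces only one-sided sandwich bounds, and matching the constants requires both the rates in~\eqref{delta's} (ensuring $\|{\rm Id}-T_n\|_\infty=o(\delta_n)$) and the integrability~\eqref{alberti}, which controls the contribution of small inter-particle distances where the perturbation is most sensitive.
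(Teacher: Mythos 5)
Your overall architecture coincides with the paper's: transportation maps convert $GF_{n,\delta_n}$ into a deterministic nonlocal functional, a kernel sandwich exploits the monotonicity of $\eta$ together with $\norma{{\rm Id}-T_n}_\infty=o(\delta_n)$ coming from (H\ref{H4}) and Theorem~\ref{trillos15}, and an Alberti-type nonlocal-to-local $\Gamma$-convergence (Proposition~\ref{questaelaproposizionepiuimportante} in the paper) supplies the constant $\alpha_d$. Two of your steps, however, do not go through as written.

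First, the claimed pointwise bound $\kappa_{\delta_n}(T_n(x)-T_n(y))\geq\kappa_{\bar\delta_n}(x-y)$ with $\bar\delta_n/\delta_n\to1$ is false for a general non-increasing $\eta$: monotonicity only yields $\eta(\vert T_n(x)-T_n(y)\vert/\delta_n)\geq\eta((\vert x-y\vert+2\norma{{\rm Id}-T_n}_\infty)/\delta_n)$, and an \emph{additive} shift of the argument cannot be absorbed into a \emph{multiplicative} dilation $\vert x-y\vert/\bar\delta_n$ uniformly as $\vert x-y\vert\to0$ --- precisely the near-diagonal region where the kernel is largest. The paper circumvents this by first truncating to a step kernel $\eta_0$ as in \eqref{eta_0}, for which the shift is equivalent to a dilation (see \eqref{implication}--\eqref{questoqui}), proving the liminf for such kernels, and then recovering general $\eta$ by monotone approximation from below by sums of step kernels together with superadditivity of the liminf. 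You correctly single out the one-sidedness of the sandwich as the principal obstacle, but the truncation/monotone-convergence device is the missing ingredient, not an optional refinement. (Your liminf also replaces the paper's Alberti--Bellettini decomposition and blow-up by a symmetrization to $\vert v_n(x)-v_n(y)\vert$; the inequality $(1-a)b+(1-b)a\geq\vert a-b\vert$ for $a,b\in[0,1]$ makes this legitimate, but the weighted nonlocal-TV liminf with the factor $\rho^2$ on $S_u$ is asserted rather than proved.)

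Second, the recovery sequence $u_n=u\circ T_n^{-1}$ with $u_n\circ T_n=u$ $\nu$-a.e.\ cannot exist: $T_n$ takes values in the finite set $\{X_1,\dots,X_n\}$, so $u_n\circ T_n$ is constant on each transport cell $T_n^{-1}(X_i)$ and cannot agree $\nu$-a.e.\ with a non-piecewise-constant $u$. Some quantitative control of the discrepancy between $u_n$ and $u_n\circ T_n$ is indispensable; the paper obtains it by evaluating a Lipschitz approximation $u_{\delta_n}$ of $u$ at the data points and using $\vert u_{\delta_n}(x)-u_{\delta_n}(T_n(x))\vert\leq{\rm Lip}(u_{\delta_n})\norma{{\rm Id}-T_n}_\infty$, so that the resulting error $\norma{{\rm Id}-T_n}_\infty\delta_n^{-1}\int_D\vert\nabla u_{\delta_n}\vert\,\de x$ vanishes by \eqref{delta's}. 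For a polyhedral $u=\chi_E$ one could instead bound the measure of $\{\chi_E\circ T_n\neq\chi_E\}$ by ${\rm Per}(E)\norma{{\rm Id}-T_n}_\infty$, but as stated your construction supplies no such estimate.
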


The next two results that we present hold in the case of constant density $\rho=|D|^{-1}$.
For $D\subset\R^{d}$ such that $|D|\geq2$,  consider the class
\begin{align*}
\X(D)\coloneqq \big\{ (A,O)\in \B(D)\times \B(D): \vert A\vert =\vert O\vert=1, \vert A\cap O\vert=0\big\}.
\end{align*}
We study the asymptotic behavior as $\delta_{n} \rightarrow 0$ of the sequence of functionals $\G_{n,\delta_{n}}\colon \B(D)\times \B(D)\rightarrow [0,+\infty]$ defined as
\begin{align}\label{ourfunct}
\G_{n,\delta_{n}}(A,O)\coloneqq
\begin{cases}
GF_{n,\delta_{n}}(\chi_{A}) +W_{p}(A,O) & 
\text{if $(A,O)\in \X(D)$,} \\
+\infty  &\text{otherwise.}
\end{cases}
\end{align}
Here, we use the notation $W_{p}(A,O)$ to denote the $p$-th Wasserstein distance between the indicator functions $\chi_{A}$ and $\chi_{O}$ of two sets~$A,O\in\B(D)$ such that $|A|=|O|=1$. 
\begin{lemma}\label{corb1}
Assume that (H\ref{H1}--\ref{H4}) hold true with $\rho=\vert D\vert^{-1}$.
Let $\{(A_{{n}},O_{{n}})\}_{{n}}\subset \X(D)$ be a sequence such that 
\begin{equation}\label{ultima}
\G_{n,\delta_n}(A_{n},O_{n})\leq  \inf_{(\widetilde A,\widetilde O)\in\X(D)}\G_{n,\delta_{n}}(\widetilde A,\widetilde O)+\frac1n.
\end{equation}
Then there exists $(A,O)\in\X(D)$, with~$A$ a set of finite perimeter, such that (up to subsequence) 
\begin{equation}\label{hypothesis}
\chi_{A_{n}}\to\chi_A\quad\text{in $L^1(D)$}\qquad\text{and}\qquad \chi_{O_{n}}\stackrel*\rightharpoonup \chi_O\quad\text{in $L^\infty(D)$}\qquad\text{as $n\to\infty$.}
\end{equation}
\end{lemma}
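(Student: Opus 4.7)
The plan is to bound $\G_{n,\delta_n}(A_n,O_n)$ uniformly in $n$ via a fixed smooth competitor, to apply Lemma~\ref{compatt} together with the $\Gamma$-liminf of Theorem~\ref{ourgamma} to extract a $TL^1(D)$-limit $\chi_A$ with $A$ of finite perimeter, and finally to upgrade $TL^1$ to $L^1$ convergence by combining weak-$*$ compactness in $L^\infty(D)$ with the characteristic-function identity $\|\chi_{A_n}-\chi_A\|_{L^2(D)}^2 = |A_n|-2\int_D \chi_{A_n}\chi_A\,dx + |A|$.

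\emph{Energy bound and $TL^1$-compactness.} Since $|D|\geq 2$, I fix once and for all two disjoint smooth sets $A^*,O^*\subset D$ of unit Lebesgue measure, so $(A^*,O^*)\in\X(D)$ and $W_p(A^*,O^*)$ is a deterministic constant. A law-of-large-numbers computation---of the type underpinning the $\Gamma$-limsup of Theorem~\ref{ourgamma}, made quantitative by hypothesis~(H\ref{H4})---yields $\sup_n GF_{n,\delta_n}(\chi_{A^*})<\infty$ $\P$-a.s., so the almost-minimality \eqref{ultima} forces $\sup_n\G_{n,\delta_n}(A_n,O_n)\leq C<\infty$ and in particular $\sup_n GF_{n,\delta_n}(\chi_{A_n})<\infty$. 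Together with $\|\chi_{A_n}\|_{L^1_{\nu_n}}\leq 1$, Lemma~\ref{compatt} supplies a subsequence with $(\nu_n,\chi_{A_n})\to(\nu,u)$ in $TL^1(D)$; since $\rho=|D|^{-1}$ is a positive constant, the $\Gamma$-liminf of Theorem~\ref{ourgamma} yields $\alpha_d TV(u;\rho^2)<\infty$ and forces $u=\chi_A$ for a Borel set $A\subset D$ of finite perimeter.

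\emph{Upgrade to $L^1$ and compactness for $O_n$.} Banach--Alaoglu, applied to $\{\chi_{A_n}\}\subset L^\infty(D)$, produces a further subsequence with $\chi_{A_n}\weakstar g$ in $L^\infty(D)$, $0\leq g\leq 1$, and $\int_D g\,dx=\lim_n|A_n|=1$. To identify $g=\chi_A$ a.e., I exploit the stagnation $\|T_n-\mathrm{Id}\|_\infty\to 0$ from Theorem~\ref{trillos15} and Remark~\ref{iofareiunremark}: for any $\phi\in C(\ov D)$, uniform continuity rewrites the $TL^1$-pairing as
\[
\int u\,\phi\,d\nu = \lim_{n\to\infty}\frac{1}{n}\sum_{i=1}^n\chi_{A_n}(X_i)\,\phi(X_i),
\]
while the weak-$*$ pairing gives $\int g\,\phi\,d\nu = \lim_n |D|^{-1}\int_D\chi_{A_n}\,\phi\,dx$; matching the two limits forces $g=\chi_A$ and hence $|A|=1$. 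Weak-$*$ convergence tested against $\chi_A\in L^1(D)$ then gives $\int_D\chi_{A_n}\chi_A\,dx\to |A|=1$, whence
\[
\|\chi_{A_n}-\chi_A\|_{L^2(D)}^2 = |A_n|-2\!\int_D \chi_{A_n}\chi_A\,dx+|A|\ \longrightarrow\ 1-2+1=0,
\]
upgrading $\chi_{A_n}\weakstar\chi_A$ to strong $L^2(D)$-, and hence $L^1(D)$-convergence. For $\{\chi_{O_n}\}$, Banach--Alaoglu provides a subsequence with $\chi_{O_n}\weakstar h$ in $L^\infty(D)$, $0\leq h\leq 1$, $\int_D h\,dx=1$; once $h$ is identified with an indicator $\chi_O$ (see the obstacle below), pairing with the strongly convergent $\chi_{A_n}$ passes $\int_D\chi_{A_n}\chi_{O_n}\,dx=0$ to the limit as $\int_D\chi_A\chi_O\,dx=0$, so $(A,O)\in\X(D)$.

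\emph{Main obstacle.} The delicate step is the identification $g=\chi_A$: the discrepancy $\int_D\chi_{A_n}\phi\,(d\nu_n-d\nu)$ between the empirical and continuous pairings carries no a priori uniform rate over varying Borel sets $A_n$, because $GF_{n,\delta_n}(\chi_{A_n})$ sees only the values $\{\chi_{A_n}(X_i)\}$ and places no classical perimeter bound on $A_n$. Handling it requires exploiting hypothesis~(H\ref{H4}) together with the regularity tacitly encoded by $\sup_n GF_{n,\delta_n}(\chi_{A_n})<\infty$, via either a Glivenko--Cantelli-type equi-discrepancy on the class of admissible ``discrete fingerprints'' under the energy bound, or a surrogate construction replacing $A_n$ by a near-minimizing set of bounded classical perimeter with the same values at the $X_i$. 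A parallel difficulty---upgrading the weak-$*$ $L^\infty$ limit of $\chi_{O_n}$ to an actual characteristic function $\chi_O$ rather than a generic density supported outside $A$---arises for $O_n$ and likely relies on the structural constraints imposed by the Wasserstein penalty in $\G_{n,\delta_n}$ combined with the disjointness and mass constraints.
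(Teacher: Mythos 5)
Your first half follows the paper's route: a fixed smooth competitor $(A^*,O^*)\in\X(D)$ gives a uniform bound on $\G_{n,\delta_n}(A_n,O_n)$, Lemma~\ref{compatt} together with Proposition~\ref{prop2.7} yields a $TL^1$-convergent subsequence, and the liminf part of Theorem~\ref{ourgamma} forces the limit to be $\chi_A$ with $A$ of finite perimeter. The paper then reads the first convergence in \eqref{hypothesis} directly off this (here $\rho=|D|^{-1}$ is constant, so no weighted subtleties arise), without the weak-$*$/$L^2$ detour you attempt; and indeed you cannot close that detour, since the identification $g=\chi_A$ is precisely the discrepancy between the empirical pairing $\int\chi_{A_n}\phi\,\de\nu_n$ and the Lebesgue pairing $\int\chi_{A_n}\phi\,\de\nu$ that you flag as unresolved.

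The genuine gap is the treatment of $O_n$: you stop at a weak-$*$ limit $h$ with $0\le h\le 1$ and only gesture at ``structural constraints of the Wasserstein penalty''. The paper's key mechanism, absent from your proposal, is variational: since $GF_{n,\delta_n}(\chi_{A_n})$ does not depend on the second argument, the near-minimality \eqref{ultima} gives $W_p(A_n,O_n)\leq W_p(A_n,\widetilde O)+\tfrac1n$ for every admissible partner $\widetilde O$ of $A_n$; passing to the limit and relaxing, the weak-$*$ limit $\bar\theta$ of $\chi_{O_n}$ minimizes $\theta\mapsto W_p(A,\theta)$ over the class $\mathcal{C}(A)$ of densities $0\leq\theta\leq1$ with $\int_D\theta\,\de x=1$ and $\int_A\theta\,\de x=0$; and the bang-bang result of \cite[Theorem 3.10]{buttazzo2020wasserstein} then asserts that such a minimizer is a characteristic function $\chi_O$, which also yields $\vert A\cap O\vert=0$ and hence $(A,O)\in\X(D)$. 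Without this minimality-transfer plus bang-bang argument (or an equivalent extreme-point argument), your proof of $\chi_{O_n}\stackrel*\rightharpoonup\chi_O$ is incomplete, so the proposal does not establish the lemma.
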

Finally, we will prove the following $\Gamma$-convergence result.
\begin{theorem}[Gamma-convergence]\label{mainresult}
Assume that (H\ref{H1}--\ref{H4}) hold true with $\rho=\vert D\vert^{-1}$. Then
\begin{align*}
    \Big(\Gamma-\lim_{n\rightarrow \infty}\G_{n,\delta_{n}}\Big)(A,O)=\frac{\alpha_d}{|D|^2}
    {\rm Per}(A;D)+ W_{p}(A,O),
\end{align*}
where $\alpha_d$ is defined in~\eqref{costant}, and the metric of the $\Gamma$-convergence is $($s-$L^1(D))\times($w$^*$-$L^\infty(D))$.
\end{theorem}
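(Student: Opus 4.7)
The plan is to reduce the problem to the $\Gamma(TL^1)$-convergence of $GF_{n,\delta_n}$ already established in Theorem~\ref{ourgamma} (which, since $\rho\equiv|D|^{-1}$ is constant, gives the perimeter functional with prefactor $\alpha_d/|D|^2$ on characteristic functions) and to treat the $W_p$ contribution via standard semicontinuity. I would start the liminf with $(A_n,O_n)\in\X(D)$ such that $\chi_{A_n}\to\chi_A$ in $L^1(D)$ and $\chi_{O_n}\weakstar\chi_O$ in $L^\infty(D)$, under the non-trivial assumption $\liminf\G_{n,\delta_n}(A_n,O_n)<+\infty$. First, I would check that $(A,O)\in\X(D)$: the mass constraints pass to the limit by the respective convergences, and $|A\cap O|=0$ follows from the strong-weak pairing $\int\chi_{A_n}\chi_{O_n}\,\de x\to\int\chi_A\chi_O\,\de x$. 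Then, splitting $\G_{n,\delta_n}=GF_{n,\delta_n}+W_p$, the lower semicontinuity of $W_p$ under weak convergence of measures produces $W_p(A,O)\le\liminf W_p(A_n,O_n)$, so the real task is to establish $(\alpha_d/|D|^2){\rm Per}(A;D)\le\liminf GF_{n,\delta_n}(\chi_{A_n})$.

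\textbf{Identification in $TL^1$.} The compactness Lemma~\ref{compatt} applies (the $L^1_{\nu_n}$-norm and $GF_{n,\delta_n}(\chi_{A_n})$ are both uniformly bounded along the selected subsequence), so I can extract a further subsequence along which $(\nu_n,\chi_{A_n})\to(\nu,f)$ in $TL^1(D)$. The $\Gamma(TL^1)$-liminf of Theorem~\ref{ourgamma} forces $f\in{\rm BV}(D)$ with $f\in\{0,1\}$ $\nu$-a.e., i.e.\ $f=\chi_B$ for some set $B$ of finite perimeter. It then remains to identify $B=A$. For this I would invoke the transportation maps $T_n$ of Theorem~\ref{trillos15}, stagnating by Remark~\ref{iofareiunremark} under~(H\ref{H4}), and decompose
\[
\|\chi_A-\chi_B\|_{L^1(\nu)}\le\|\chi_A-\chi_{A_n}\|_{L^1(\nu)}+\|\chi_{A_n}-\chi_{A_n}\circ T_n\|_{L^1(\nu)}+\|\chi_{A_n}\circ T_n-\chi_B\|_{L^1(\nu)}.
\]
The first summand vanishes by hypothesis, the third by Proposition~\ref{prop2.7}(4), and the second is controlled by combining $\|T_n-\mathrm{Id}\|_\infty\to0$ with a uniform perimeter bound on the $A_n$'s extracted from $GF_{n,\delta_n}(\chi_{A_n})\le C$ via a nonlocal-to-local perimeter comparison for the kernel $\kappa$ (the translation-continuity estimate $|A_n\triangle T_n^{-1}(A_n)|\lesssim\|T_n-\mathrm{Id}\|_\infty\,{\rm Per}(A_n)$). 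Applying the $\Gamma(TL^1)$-liminf of Theorem~\ref{ourgamma} to $f=\chi_A$ then yields the required bound.

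\textbf{Limsup and main obstacle.} For the limsup, given $(A,O)\in\X(D)$ with $A$ of finite perimeter I would take the constant recovery sequence $A_n\equiv A$, $O_n\equiv O$: the required product convergence is tautological and $W_p(A_n,O_n)=W_p(A,O)$, so the problem reduces to showing $\limsup_n GF_{n,\delta_n}(\chi_A)\le(\alpha_d/|D|^2){\rm Per}(A;D)$. The convergence $(\nu_n,\chi_A)\to(\nu,\chi_A)$ in $TL^1$ follows from the same decomposition as above (the second and third summands being trivially zero for a fixed set), after which one can either invoke the limsup of Theorem~\ref{ourgamma} at $u=\chi_A$, or expand
\[
GF_{n,\delta_n}(\chi_A)=\frac{1}{\delta_n n^2}\sum_{i,j=1}^n\kappa_{\delta_n}(X_i-X_j)|\chi_A(X_i)-\chi_A(X_j)|
\]
and use a U-statistic strong law of large numbers to compare it with its expectation $\frac{1}{|D|^2\delta_n}\iint\kappa_{\delta_n}(x-y)|\chi_A(x)-\chi_A(y)|\,\de x\,\de y$, which converges to $(\alpha_d/|D|^2){\rm Per}(A;D)$ by the classical Alberti--Bellettini/Bourgain--Brezis--Mironescu nonlocal approximation of the perimeter. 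The main obstacle I expect is the derivation of the uniform perimeter bound on $A_n$ from $GF_{n,\delta_n}(\chi_{A_n})\le C$ needed in the identification step, since it requires controlling a nonlocal-to-local comparison on the random empirical measure $\nu_n$ exactly at the threshold scale $\delta_n$ prescribed by~(H\ref{H4}); the U-statistic SLLN in the limsup, with the singular kernel $\kappa_{\delta_n}$, is a secondary technical point requiring care.
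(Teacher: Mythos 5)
Your high-level plan is the paper's: $W_p$ is a continuous perturbation in $($s-$L^1)\times($w$^*$-$L^\infty)$, everything reduces to $GF_{n,\delta_n}$ via Theorem~\ref{ourgamma}, and the constant sequence serves as recovery sequence. The problems lie in how you bridge s-$L^1(D)$ convergence of $\chi_{A_n}$ with the $TL^1$ convergence that Theorem~\ref{ourgamma} actually speaks about. In your identification step, the ``uniform perimeter bound on $A_n$ extracted from $GF_{n,\delta_n}(\chi_{A_n})\le C$'' does not exist: the discrete energy depends only on the $n$ values $\chi_{A_n}(X_1),\dots,\chi_{A_n}(X_n)$, so it carries no information on ${\rm Per}(A_n;D)$ (you may replace $A_n$ by any Borel set agreeing with it on the Lebesgue-null set $\{X_1,\dots,X_n\}$ without changing the energy, and by any set agreeing with it up to a null set without changing the $L^1(D)$-limit). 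Even granting such a bound, $|A_n\triangle T_n^{-1}(A_n)|\lesssim\lVert T_n-{\rm Id}\rVert_\infty\,{\rm Per}(A_n)$ is a translation estimate; for a general transport map it would require bounding the measure of the $\lVert T_n-{\rm Id}\rVert_\infty$-neighbourhood of $\partial^*A_n$ by the perimeter, which fails for general sets of finite perimeter (Minkowski content can strictly exceed, even be infinite compared with, the perimeter). So the middle term of your triangle inequality is not controlled. To be fair, the paper's own proof declares the liminf ``a direct consequence of Theorem~\ref{ourgamma}'' and does not confront this mismatch at all; you have correctly located a genuine subtlety, but your repair does not close it.

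On the limsup, two remarks. First, establishing $(\nu_n,\chi_A)\to(\nu,\chi_A)$ in $TL^1$ and then ``invoking the limsup of Theorem~\ref{ourgamma} at $u=\chi_A$'' is not a valid inference: the $\Gamma$-limsup guarantees the existence of \emph{some} recovery sequence, not that every $TL^1$-converging sequence --- in particular the constant one --- attains the bound. The paper's route is to first reduce to polyhedral $A$ by approximation in perimeter and then check directly (Remark~\ref{facciamounremark}) that the constant sequence works; the polyhedral reduction is precisely what legitimises the key estimate $\int_D|\chi_A(x)-\chi_A(T_n(x))|\,\de x\lesssim\lVert{\rm Id}-T_n\rVert_\infty$ (finite Minkowski content of the jump set), and you skip it. Second, your U-statistics alternative runs into exactly the difficulty you flag: at the scales of (H\ref{H4}) the kernel $\kappa_{\delta_n}$ is singular and the summands form a triangular array rather than a fixed i.i.d.\ family, so the classical SLLN for U-statistics does not apply; the transport-map machinery of Theorem~\ref{ourgamma} is what replaces it. The rest of your liminf (membership of the limit in $\X(D)$ via the strong--weak pairing, lower semicontinuity of $W_p$, use of Lemma~\ref{compatt} and Proposition~\ref{prop2.7}) is correct and consistent with the paper.
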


\begin{remark}
In view of Proposition~\ref{DMM_p29} we point out that both the $\Gamma$-convergence results in Theorems~\ref{ourgamma} and~\ref{mainresult} hold also in law.
\end{remark}

\section{Proofs}\label{dimost}
We start by proving our compactness result. 
\begin{proof}[Proof of Lemma \ref{compatt}]
Considering the sequence of transportation maps $\{T_{n}\}_{n\in\N}$, we have 
\begin{equation}\label{uguaglianza}
\!\!
GF_{n,\delta_{n}}(u_{n})= \frac{2}{\delta_{n}}\iint_{D\times D}\kappa_{\delta_n}( T_{n}(x)-T_{n}(y) )u_{n}( T_{n}(x)) \big(1-u_{n}( T_{n}(y))\big) \rho(x)\rho(y)\,\de x\de y,
\end{equation}
and, by the second condition in \eqref{HP_CPT}, there exists $C>0$ such that
\begin{equation*}
\begin{split}
+\infty>C>&\, GF_{n,\delta_{n}}(u_{n}) \\
=&\, \frac{2}{\delta_{n}^{d+1}}\iint_{D\times D}\eta\bigg(\frac{\vert T_{n}(x)-T_{n}(y) \vert}{\delta_{n}}\bigg)u_{n}( T_{n}(x)) \big(1-u_{n}( T_{n}(y))\big) \rho(x)\rho(y)\,\de x\de y\\
\geq&\, \frac{2a^2}{\delta_{n}^{d+1}}\iint_{D\times D}\eta_0\bigg(\frac{\vert T_{n}(x)-T_{n}(y) \vert}{\delta_{n}}\bigg)u_{n}( T_{n}(x)) \big(1-u_{n}( T_{n}(y))\big) \,\de x\de y\\
\end{split}
\end{equation*}
where we have used, in sequence, the very definition of the rescaled kernel $\kappa_\delta$, hypothesis (H\ref{H3}), and \eqref{eta_0b}.

Observe now that, by the definition of $\lVert\cdot\rVert_\infty$, for almost every $x,y\in D$ the following implication holds true 
\begin{equation}\label{implication}
\vert T_{n}(x)-T_{n}(y)\vert >r_0\delta_{n}\quad\Rightarrow\quad \vert x-y\vert>r_0\delta_{n}-2\norma{{\rm Id}-T_{n}}_{\infty}\,,
\end{equation} 
where $r_0$ has been introduced in \eqref{eta_0a}. 
We define the quantity 
\begin{equation}\label{deltatilde}
\tilde{\delta}_{n}\coloneqq \delta_{n}-2r_0^{-1}\norma{{\rm Id}-T_{n}}_{\infty}
\end{equation}
and notice that, by \eqref{estimaL} and \eqref{delta's}, 
\begin{equation}\label{limite}
\lim_{n\to\infty} \frac{\tilde{\delta}_n}{\delta_n}=1,
\end{equation}
so that, in particular, for large enough $n$, we have that $\tilde{\delta}_n>0$.
Moreover, for almost every $x,y\in D$, \eqref{implication} yields
\begin{align}\label{questoqui}
\eta_0\bigg(\frac{\vert x-y\vert}{\tilde{\delta}_{n}}\bigg)\leq \eta_0\bigg(\frac{\vert T_{n}(x)-T_{n}(y)\vert}{\delta_{n}}\bigg).
\end{align}
By \eqref{limite} and \eqref{questoqui}, we can continue with the chain of inequalities above and obtain that 
\begin{align*}
+\infty>C>\frac{1}{\tilde{\delta}_{n}^{d+1}}\iint_{D\times D}\eta_0\bigg(\frac{\vert x-y \vert}{\tilde{\delta}_{n}}\bigg)u_{n}( T_{n}(x)) \big(1-u_{n}( T_{n}(y))\big) \,\de x\de y
\end{align*}
for~$n$ large enough.
Then by \cite[Theorem~3.1]{lussardi2013} $\{u_{n}\circ T_{n}\}_{n\in\N}$ is relatively compact in~$L^{1}_{\nu}(D)$ so that by Remark~\ref{iofareiunremark}, condition \eqref{stagn2} is satisfied with $p=1$.
Therefore, by Proposition~\ref{prop2.7}-5, we obtain that the pair $\{(\nu_n,u_{n})\}_{n\in \N}$ is relatively compact in~$TL^{1}(D)$. 
%
%
%
%
%
\end{proof}

In order to prove Theorem~\ref{ourgamma}, we define the auxiliary functional $\F_{\delta}(\cdot;\rho)\colon L^{1}(D)\rightarrow [0,+\infty]$ defined as
\begin{align}\label{glimit1rho}
\!\!
\F_{\delta}(u;\rho)\coloneqq
\begin{cases}
\displaystyle \frac{2}{\delta}\iint_{D\times D}  \kappa_{\delta}(x-y)(1-u(x))u(y)\rho(x)\rho(y) \,\de x\de y & \text{if $0\leq u\leq 1$,}\\[3mm]
+\infty & \text{otherwise in $ L^{1}(D)$.}
\end{cases}
\end{align}
To prove the following result, our inspiration comes from the proof strategy of the main result in \cite{lussardi2013}, which, in turn, relies on the main result of \cite{alberti1998}.
The idea in \cite{lussardi2013} was to notice that their functional $u\mapsto F_\eps(u)$ (see \cite[Section~3]{lussardi2013}) could be written, up to the change of variables $v=2u-1$, as the functional (see \cite[formula (1.1)]{alberti1998}) $v\mapsto F_\eps(v)\eqqcolon \AB_\eps(v;J,W)$ for a suitable choice of the interaction kernel~$J$ and of the potential~$W$. 
\begin{proposition}\label{questaelaproposizionepiuimportante}
The sequence $\{\F_{\delta}(\cdot; \rho)\}_{\delta>0}$ $\Gamma$-converges with respect to the $L^{1}(D)$ metric to $\alpha_{d}TV(\cdot;\rho^{2})$.
\end{proposition}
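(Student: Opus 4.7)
The plan is to adapt the strategy of \cite{lussardi2013}, where such functionals are recast, via the affine change of variables $v = 2u - 1$, as weighted Alberti--Bellettieri functionals in the sense of \cite{alberti1998}. Exploiting the evenness of $\kappa_\delta$ and the symmetry of $\rho(x)\rho(y)$ under $(x,y)\mapsto (y,x)$, after symmetrizing the integrand and using the elementary identity
\[
(1-u(x))u(y) + (1-u(y))u(x) = \tfrac14(v(x)-v(y))^2 + \tfrac14\big[(1-v(x)^2) + (1-v(y)^2)\big],
\]
one may rewrite
\[
\F_\delta(u;\rho) = \frac{1}{4\delta}\iint_{D\times D}\kappa_\delta(x-y)(v(x)-v(y))^2\rho(x)\rho(y)\,\de x\,\de y + \frac{1}{2\delta}\iint_{D\times D}\kappa_\delta(x-y)(1-v(x)^2)\rho(x)\rho(y)\,\de x\,\de y.
\]
Integrating $y$ out of the second integral and using continuity of $\rho$, the potential part is asymptotic to $\tfrac{1}{2\delta}\int_D(1-v^2)\rho^2\,\de x$ (up to the constant $\int\kappa$), a double-well potential vanishing exactly at $v=\pm 1$ (equivalently $u\in\{0,1\}$); coupled with the first, non-local Dirichlet-type term, this is precisely the $\AB_\delta(v;\kappa,W)$ structure of \cite[formula (1.1)]{alberti1998}, modulated by the weight $\rho(x)\rho(y)$.

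For the $\liminf$ inequality, I would take $u_\delta\to u$ in $L^1(D)$ with $\sup_\delta\F_\delta(u_\delta;\rho)<+\infty$. The potential part, combined with $\rho\geq a>0$, forces $\int_D u_\delta(1-u_\delta)\,\de x\to 0$, so $u\in\{0,1\}$ a.e. To extract the $\rho^2$ weight I would localize, partitioning $D$ into small cubes $Q_j$ over which the oscillation of $\rho$ tends to zero; on each cube the functional is squeezed between constant-weight versions with weight $(\inf_{Q_j}\rho)^2$ and $(\sup_{Q_j}\rho)^2$, and applying the Alberti--Bellettieri $\Gamma$-convergence result to each local unweighted problem yields the lower bound $\alpha_d(\inf_{Q_j}\rho)^2\,\cH^{d-1}(\partial\{u=1\}\cap Q_j)$. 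Summing over $j$ and refining the partition gives $\liminf_{\delta\to 0}\F_\delta(u_\delta;\rho)\geq\alpha_d TV(u;\rho^2)$.

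For the $\limsup$, I would use Theorem~\ref{densbv} together with the standard approximation of sets of finite $\rho^2$-weighted perimeter by sets with smooth boundary, reducing to the case $u=\chi_A$ with $\partial A$ smooth. For such a target the constant sequence $u_\delta=\chi_A$ is admissible: a direct coarea-type expansion combined with the change of variables $h=\delta z$, and the freezing $\rho(y)\approx\rho(x)$ on scale $|x-y|\lesssim\delta$ guaranteed by the Lipschitz continuity in (H\ref{H3}), yields
\[
\lim_{\delta\to 0}\F_\delta(\chi_A;\rho) = \bigg(\int_{\R^d}|z\cdot e|\kappa(z)\,\de z\bigg)\int_{\partial A}\rho^2\,\de\cH^{d-1} = \alpha_d\int_{\partial A}\rho^2\,\de\cH^{d-1},
\]
where a polar-coordinates computation identifies the constant with $\alpha_d$ of \eqref{costant}. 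A diagonal argument then transfers the recovery sequence from smooth sets to general admissible $u$.

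The main technical obstacle is precisely the weight $\rho(x)\rho(y)$, which does not factor through the non-local kernel and so prevents a direct quotation of \cite{alberti1998}. The localization argument sketched above exploits the scale separation between the $O(1)$ variation of $\rho$ and the vanishing support of $\kappa_\delta$, allowing $\rho(x)\rho(y)$ to be replaced by $\rho(x)^2$ modulo errors that are negligible thanks to the Lipschitz regularity and positive upper and lower bounds on $\rho$ in (H\ref{H3}); the moment condition \eqref{alberti} guarantees that these error estimates survive the rescaling by $1/\delta$.
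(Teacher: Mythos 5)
Your proposal is correct and starts from the same reduction as the paper (the substitution $v=2u-1$ and the rewriting of $\F_\delta(\cdot;\rho)$ as a weighted Alberti--Bellettini functional; your algebraic identity is exactly the paper's decomposition into $\Phi_\delta+\Psi_\delta$), but the two inequalities are implemented differently. For the liminf, the paper does not partition $D$ into cubes with frozen weight: it localizes the functional, proves the rescaling estimate \eqref{id3} via the Lipschitz bound on $\rho$, and then runs a blow-up argument on the energy distributions $\tilde\lambda_\delta$, computing the Radon--Nikod\'ym derivative of the limit measure on $S_u$ with a diagonalization as in Alberti--Bellettini; your cube-by-cube squeezing with superadditivity and mesh refinement is a more elementary route to the same bound, at the price of handling the same boundary deficit of the nonlocal potential near $\partial Q_j$ that the paper's comparison with $\AB_{\delta/r}$ also glosses over (the term $\int\kappa_\delta(x-y)\,\de y$ falls short of $\int_{\R^d}\kappa$ near the boundary of the localization set), so neither write-up is worse off on that point. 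For the limsup, you take the constant sequence $u_\delta=\chi_A$ and identify the limit by a direct first-moment computation; a polar-coordinates check confirms $\int_{\R^d}\kappa(z)\,\vert z\cdot e\vert\,\de z$ coincides with $\alpha_d$ in \eqref{costant}, so the constant sequence indeed attains the liminf bound, which is exactly what the paper concedes in Remark~\ref{facciamounremark}. The paper instead builds Lipschitz recovery sequences and a covering argument \`a la Alberti--Bellettini; this is not needed for the proposition itself, but it is deliberate, because the equibounded gradients of those Lipschitz recovery functions are reused in the limsup step of Theorem~\ref{ourgamma} (to control $\vert u_n(x)-u_n(T_n(x))\vert$ by $\lVert{\rm Id}-T_n\rVert_\infty\int_D\vert\nabla u_{\delta_n}\vert\,\de x$), so your constant-sequence shortcut buys simplicity here but would have to be replaced when passing to the discrete functionals. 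Finally, your reduction to smooth sets needs approximation by \emph{sets} in the $\rho^2$-weighted perimeter (Reshetnyak-type continuity with the continuous bounded weight), not just the smooth-function density of Theorem~\ref{densbv}; you state the right fact, just make sure to invoke the set version as the paper does with polyhedral approximation.
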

\begin{proof}
Notice that, upon defining $v\coloneqq 2u-1\in L^1(D)$, we have
\begin{equation}\label{somma}
\F_\delta(u;\rho)=\Phi_\delta (v;\kappa,\rho)+\Psi_\delta(v;\kappa,\rho)\eqqcolon \widetilde{\AB}_\delta(v;\kappa,\rho),
\end{equation}
where $\Phi_\delta(\cdot;\kappa,\rho),\Psi_\delta(\cdot;\kappa,\rho)\colon L^1(D)\to[0,+\infty]$ are defined by
\begin{equation*}
\Phi_\delta (v;\kappa,\rho)\coloneqq
\begin{cases}
\displaystyle \frac1{4\delta} \iint_{D\times D} \kappa_\delta(x-y)(v(x)-v(y))^2\rho(x)\rho(y)\,\de x\de y &\text{if $-1\leq v\leq 1$,}\\[3mm]
+\infty & \text{otherwise},
\end{cases}
\end{equation*}
and
\begin{equation*}
\Psi_\delta (v;\kappa,\rho)\coloneqq
\begin{cases}
\displaystyle \frac1{2\delta} \iint_{D\times D} \kappa_\delta(x-y)(1-v^2(x))\rho(x)\rho(y)\,\de x\de y &\text{if $-1\leq v\leq 1$,}\\[3mm]
+\infty & \text{otherwise},
\end{cases}
\end{equation*}
respectively.
Equality \eqref{somma} is a matter of some algebraic computations (see \cite[proof of Theorem~3.1]{lussardi2013}).

\smallskip

\emph{Step 1 -- liminf  inequality}. 
For fixed $\kappa$ and $\rho$, and  $v\in L^1(D;[-1,1])$, we consider the set functions $\widetilde{\AB}_\delta(v;\kappa,\rho;\cdot),\Phi_\delta (v;\kappa,\rho;\cdot),\Psi_\delta (v;\kappa,\rho;\cdot)\colon \cA(D)\to[0,\infty]\to[0,+\infty)$ 
defined as
\begin{equation}\label{localized}
\widetilde{\AB}_\delta(v;\kappa,\rho;A)\coloneqq \Phi_{\delta}(v;\kappa,\rho;A)+\Psi_{\delta}(v;\kappa,\rho;A),
\end{equation}
where
\begin{eqnarray*}
A\mapsto \Phi_{\delta}(v;\kappa,\rho;A)&\!\!\!\!\coloneqq &\!\!\!\! \displaystyle\frac{1}{4\delta}\iint_{A\times A}\kappa_{\delta}(x-y)(v(x)-v(y))^{2}\rho(x)\rho(y)\de x \de y, \\
A\mapsto \Psi_{\delta}(v;\kappa,\rho;A)&\!\!\! \coloneqq &\!\!\!  \frac{1}{2\delta}\iint_{A\times A}\kappa_{\delta}(x-y)(1-v^{2}(x))\rho(x)\rho(y)\de x\de y. 
\end{eqnarray*}
For given $\bar x\in D$ a point, $r>0$ a sufficiently small radius, and $v\colon D\to\R{}$ a function, we define the $r$-rescaling of $v$ at $\bar x$ by
\begin{align*}
R_{\bar{x},r}v(x)\coloneqq v(\bar{x}+rx)
\end{align*}
for all $x\in r^{-1}(D-\bar x)$.

We start by proving that for every set $A\in\cA(D)$, $r>0$ small enough, and $\bar{x}\in D$ 
we have
\begin{equation}\label{id3}
\widetilde{\AB}_{\delta}(v;\kappa,\rho; \bar{x}+rA)\geq r^{d-1}(\rho(\bar{x})-rc)^{2}{\AB}_{\delta/r}(R_{\bar{x},r}v;\kappa;A)
\end{equation}
for some constant $c>0$ only depending on the Lipschitz constant of $\rho$. 
In \eqref{id3}, the functional $\AB_\eps(w;\kappa;A)$ is the localization of the functional defined by
$$\AB_\eps(w;\kappa)\coloneqq\frac1{4\eps} \iint_{D\times D} \kappa_\eps(x-y)(w(x)-w(y))^2\,\de x\de y+\frac{\tau_d}{2\eps} \int_{D} |1-w^2(x)|\,\de x,$$
which is a particular case of the functional $F_\eps$ in \cite{alberti1998}, with interaction kernel $J=\kappa$ and potential $W(s)=\tau_d|1-s^2|/2$, where $\tau_d=\int_{\R{d}} \kappa(z)\,\de z$.

Indeed, by the changes of variables $x'=r^{-1}(x-\bar x)$ and $y'=r^{-1}(y-\bar x)$, we get
\begin{align*}
&\Phi_{\delta}(v;\kappa,\rho;\bar{x}+rA)= \frac{1}{4\delta}\iint_{(\bar x+rA)\times (\bar x+rA)}\kappa_{\delta}(x-y)(v(x)-v(y))^{2}\rho(x)\rho(y)\de x \de y\\
=&\, \frac{r^{2d} }{4\delta}\iint_{A\times A}\kappa_{\delta}(r(x'-y'))(v(\bar{x}+rx')-v(\bar{x}+ry'))^{2}\rho(\bar{x}+rx')\rho(\bar{x}+ry')\,\de x' \de y' \\
=&\, \frac{r^{d-1} }{4\delta/r}\iint_{A\times A}\kappa_{\delta/r}(x'-y')(v(\bar{x}+rx')-v(\bar{x}+ry'))^{2}\rho(\bar{x}+rx')\rho(\bar{x}+ry')\,\de x' \de y' \\
\geq&\, r^{d-1}(\rho(\bar x)-cr)^2 \iint_{A\times A} \kappa_{\delta/r}(x'-y')(v(\bar{x}+rx')-v(\bar{x}+ry'))^{2}\,\de x' \de y' \\
=&\, r^{d-1}(\rho(\bar x)-cr)^2 \Phi_{\delta/r}(R_{\bar x,r}v;\kappa,1;A),
\end{align*}
where we have used the Lipschitz estimate (for $z'=x',y'$) 
$$|\rho(\bar x+rz')-\rho(\bar x)|\leq {\rm Lip}(\rho)r|z'|\leq {\rm Lip}(\rho)\max_{z\in D}|z|\,r\eqqcolon cr.$$ 
Notice that $\rho(\bar x)-cr>0$ for $r>0$ small enough, owing to (H\ref{H3}).
The same reasoning can be carried out for the functional $\Psi_\delta$, so that \eqref{id3} follows.

Let us now take functions $u,u_\delta\in L^1(D;\nu)$ such that $u_\delta\to u$.
Without loss of generality, we can assume that 
\begin{equation}\label{limitato}
\liminf_{\delta\to0} \F_\delta(u_\delta;\rho)<+\infty
\end{equation} 
(otherwise the $\Gamma$-liminf inequality is trivial).
Then $u_\delta(x)\in[0,1]$ for $\nu$-a.e.~$x\in D$, so that the corresponding $v_\delta=2u_\delta-1\in[-1,1]$ for $\nu$-a.e.~$x\in D$.
By \eqref{somma} and (H\ref{H3}), we can write 
$$
\F_\delta(u_\delta;\rho)= \widetilde{\AB}_\delta(v_\delta;\kappa,\rho)\geq a^2{\AB}_\delta(v_\delta;\kappa)
$$
Compactness for the functional $\AB_\delta(\cdot;\kappa)$ with respect to the $L^1$-convergence is proved in \cite[Theorem~3.1]{alberti1998}, so that we obtain that $v=2u-1\in \BV(D;\{-1,1\})$, that is $u\in\BV(D;\{0,1\})$.

Let us now consider the energy densities associated with $\widetilde{\AB}_{\delta}(v_\delta;\kappa,\rho;A)$ and (the localization) $\AB_\delta(v_\delta;\kappa,W;A)$ given, for each $x\in A$, by
\begin{align*}
\begin{aligned}
\tilde g_{\delta}(x)\coloneqq &\, \frac{1}{4\delta}\int_{A}\kappa_{\delta}(x-y)(v_\delta(x)-v_\delta(y))^{2}\rho(x)\rho(y)\,\de y \\
&\, + \frac{1}{2\delta}\int_A \kappa_{\delta}(x-y)(1-v_\delta^{2}(x))\rho(x)\rho(y)\,\de y,
\end{aligned}
\end{align*}
\begin{equation*}
g_{\delta}(x)\coloneqq  \frac{1}{4\delta}\int_{A}\kappa_{\delta}(x-y)(v_\delta(x)-v_\delta(y))^{2}\rho(x)\rho(y)\,\de y + \frac{1}{\delta}W(v_\delta(x)), 
\end{equation*}
and the corresponding energy distributions
\begin{align*}
\tilde \lambda_{\delta}\coloneqq \tilde g_{\delta} \L^{d}\res A\qquad\text{and}\qquad \lambda_{\delta}\coloneqq g_{\delta} \L^{d}\res A.
\end{align*}
By \eqref{limitato}, the total variation of $\tilde\lambda_\delta$ is bounded uniformly with respect to~$\delta$, so that, up to a (not relabeled) subsequence, $\tilde\lambda_\delta\stackrel*\rightharpoonup\tilde\lambda$, for a certain non-negative Radon measure $\tilde\lambda$.

We now take a point $\bar x\in S_u=S_v$ where $\nu\coloneqq \nu_u(\bar x)=\nu_v(\bar x)$ is well defined, and we compute the Radon--Nikod\'ym derivative of~$\tilde\lambda$ with respect to $\cH^{d-1}\res S_v$ at~$\bar x$.
Letting~$Q_\nu$ be a unit cube with two faces perpendicular to~$\nu$, we have, owing to \eqref{id3},
\begin{equation}\label{questaqua}
\begin{split}
\frac{\de\tilde\lambda}{\de\cH^{d-1}\res S_v}(\bar x)=&\, \lim_{r\to0} \frac{\tilde\lambda(\bar x+rQ_\nu)}{r^{d-1}} =\lim_{r\to0}\lim_{\delta\to0} \frac{\tilde\lambda_\delta(\bar x+rQ_\nu)}{r^{d-1}}\\
=&\, \lim_{r\to0}\lim_{\delta\to0} \frac{\widetilde{\AB}_\delta(v_\delta;\kappa,\rho;\bar x+rQ_\nu)}{r^{d-1}}\\
\geq &\,\lim_{r\to0}\lim_{\delta\to0} (\rho(\bar{x})-rc)^{2}{\AB}_{\delta/r}(R_{\bar{x},r}v_\delta;\kappa;Q_\nu).
\end{split}
\end{equation}
Arguing as in the proof of \cite[Lemma~4.3]{alberti1998}, we can use a standard diagonalization argument and find two sequences $r_n\to0$ and $\delta_n\to0$ such that 
$$\underline\delta_n\coloneqq\delta_n/r_n\to0,\qquad \frac{\de \tilde\lambda}{\de\cH^{d-1}\res S_v}(\bar x)=\lim_{n\to\infty} \frac{\tilde\lambda(\bar x+r_nQ_\nu)}{r_n^{d-1}},$$
and, letting $v_{\underline\delta_n}\coloneqq R_{\bar x,r_n}v_{\delta_n}$,
$$\lim_{n\to\infty} v_{\underline\delta_n}=v\quad\text{in $L^1(Q_\nu)$,}\qquad\text{where}\quad v(x)=v_{\bar{x}}(x)\coloneqq
\begin{cases}
+1 & \text{if $\nm{x,\nu_{u}(\bar{x})}>0$,}\\
-1 & \text{if $\nm{x,\nu_{u}(\bar{x})}<0$.}\\
\end{cases}$$ 
so that we can continue in \eqref{questaqua} with
\[\begin{split}
\frac{\de\tilde\lambda}{\de\cH^{d-1}\res S_v}(\bar x)=&\, \lim_{n\to\infty} \frac{\tilde\lambda(\bar x+r_nQ_\nu)}{r_n^{d-1}}\geq \lim_{n\to\infty} (\rho(\bar{x})-r_nc)^{2} \AB_{\underline\delta_n}(v_{\underline\delta_n};\kappa;Q_\nu) \\
=&\, \rho^2(\bar x) \lim_{n\to\infty} \AB_{\underline\delta_n}(v_{\underline\delta_n};\kappa;Q_\nu)\geq \rho^2(\bar x) \alpha_{d},
\end{split}\]
where the last equality follows since $\AB_{\underline\delta_n}$ is bounded and $r_n\to0$ and the last inequality is obtained as in \cite[Lemma~4.3]{alberti1998}, for our choice of $W$ (see \cite[Theorem~3.1]{lussardi2013} for the details).

\smallskip

\emph{Step 2 -- limsup  inequality}. 
By standard approximation results on sets of finite perimeter (see, \emph{e.g.}, \cite[Theorem~1.24]{Giusti}), it is enough to prove the limsup inequality only for polyhedral functions $u\in\BV(D;\{0,1\})$, that is, functions whose jump set $S_u$ is a polyhedral set, \emph{i.e.}, a set whose faces are the essential union of finitely many affine hyperplanes (see \cite[Definition~5.1]{alberti1998} for the details).

As a matter of fact, the proof of \cite[Theorem~5.2]{alberti1998} can be replicated in its entirety, so we will not report it here. 
Two crucial steps are the covering argument and the rescaling property, both of which need care, due to the presence of the weight $\rho$ in the functional $\F_\delta(\cdot;\rho)$.
In particular, the translation invariance of the set functional $\widetilde{\AB}_\delta(v;\kappa,\rho;\cdot)$ defined in \eqref{localized} is not available in our case (here, as before, $v=2u-1$).

In what follows, we show how to adapt the chain of inequalities \cite[equation (5.8)]{alberti1998} to our case.
Owing to the Lipschitz property of $\rho$, estimate \eqref{id3} can be reversed to obtain
\begin{equation}\label{Lipdasopra}
\widetilde{\AB}_{\delta}(w;\kappa,\rho; \bar{x}+\delta A)\leq \delta^{d-1}(\rho(\bar{x})+\delta c)^{2}{\AB}_{1}(R_{\bar{x},\delta}w;\kappa;A),
\end{equation}
for any function~$w$ in the domain of the functional.
Let now consider a polyhedral set $A\subset D$ as in the proof of \cite[Theorem~5.2]{alberti1998} and let $v\coloneqq 2\chi_A-1$ and let $\{v_\delta\}_{\delta>0}$ be a sequence of Lipschitz functions such that
\begin{enumerate}
\item $v_\delta\to v$ in $L^1(D)$ as $\delta\to0$;
\item $\displaystyle \sup_{\delta>0}\int_{D} |\nabla v_\delta(x)|\,\de x<+\infty$;
\item $\AB_1(R_{\bar x,\delta} v_\delta;\kappa;E)\to\alpha_d$ as $\delta\to0$, for every open set $E\subset D$ and for almost every $\bar x\in S_v$.
\end{enumerate}
Notice that conditions (1) and (2) hold true thanks to standard $\BV$-approximation by Lipschitz functions, while (3) is yielded by the $L^1$-continuity of $\AB_1$\,.

By covering~$A$ with finitely many rescaled cubes $\{\bar x_i+\delta C\}_{i=1}^{h(\delta)}$, and using the subadditivity of the integral and \eqref{Lipdasopra}, we have
\begin{equation}\label{preciso}
\begin{split}
\widetilde{\AB}_\delta(v_\delta;\kappa,\rho;A)\leq&\, \widetilde{\AB}_{\delta}\bigg(v_\delta;\kappa,\rho; \bigcup_{i=1}^{h(\delta)}(\bar{x}_i+\delta C)\bigg)\leq \sum_{i=1}^{h(\delta)} \widetilde{\AB}_\delta(v_\delta;\kappa,\rho;\bar x_i+\delta C)\\
\leq&\, \sum_{i=1}^{h(\delta)} \delta^{d-1}(\rho(\bar x_i)+\delta c)^2 \AB_1(R_{\bar x_i,\delta}\,v_\delta;\kappa;C)\\
=&\, \alpha_d\int_{A\cap S_u} \rho^2(x)\,\de\cH^{d-1}(x) +o(1),
\end{split}
\end{equation}
so that inequality \cite[formula~(5.6)]{alberti1998} becomes, in our case, by choosing the recovery sequence $u_\delta=(v_\delta+1)/2$ for every $\delta>0$, 
$$\limsup_{\delta\to0} \F_\delta(u_\delta;\rho)=\limsup_{\delta\to0} \widetilde{\AB}_\delta(v_\delta;\kappa,\rho;A)\leq \alpha_d \int_{A\cap S_u} \rho^2(x)\,\de\cH^{d-1}(x).$$
Notice that in \eqref{preciso} we have applied (3) with $E=C$, to obtain
\begin{equation*}
\lim_{\delta\to0} \AB_1(R_{\bar x_i,\delta}\,v;\kappa;C)=\AB_1(v_{\bar x_i};\kappa;Q_\nu)=\alpha_d.
\end{equation*}
The proposition is proved.
\end{proof}

\begin{remark}\label{facciamounremark}
We point out that, in the proof of the limsup inequality in Proposition~\ref{questaelaproposizionepiuimportante} above, the Lipschitz regularity of the recovery sequence $\{v_\delta\}_{\delta>0}$ has never been used. 
Indeed, one could take the constant recovery sequence $v_\delta=v$ for every $\delta>0$.
We decided to require Lipschitz regularity so that the very same sequence $\{v_\delta\}_{\delta>0}$ introduced in this proof can be used in the proof of Theorem~\ref{ourgamma} below.
\end{remark}

\subsection{Proof of Theorem~\ref{ourgamma}}
In this section, we prove the convergence of $GF_{n,\delta_n}$ to $\alpha_d TV(\cdot;\rho^2)$ with respect to the $TL^{1}$ topology. 
If $u\in L^1_\nu(D)$ and $0\leq u\leq1$ then $GF_{n,\delta_n}(u)$ is given by \eqref{uguaglianza}.
We divide the proof into two steps.

\emph{Step 1 -- liminf inequality.}
Let $u\in L^1_\nu(D)$ and let $u_{n}\in L^{1}_{\nu_n}(D)$ be such that $(\nu_n,u_n)\stackrel{TL^1}{\longrightarrow}(\nu,u)$ (without loss of generality, we can assume that $0\leq u_n\leq 1$ for every $n\in\N$). 
By Remark~\ref{remark31}(1), let us consider a kernel~$\eta_0$ as in~\eqref{eta_0}.
Recall that for almost every $x,y\in D$ the implication in \eqref{implication} holds true and, defining $\tilde{\delta}_n$ as in \eqref{deltatilde}, we have that
\begin{align*}
GF_{n,\delta_{n}}(u_{n})\geq  \bigg(\frac{\tilde{\delta}_{n}}{\delta_{n}} \bigg)^{d+1}\F_{\delta_{n}}(\tilde{u}_{n};\rho),
\end{align*}
where $\tilde{u}_{n}=u_{n}\circ T_{n}$ and $\F_{\delta_{n}}(\cdot;\rho)$ is defined in \eqref{glimit1rho}.
By \eqref{limite} and Proposition~\ref{questaelaproposizionepiuimportante}, we obtain that $u\in\BV(D;\{0,1\})$ and 
\begin{align*}
\liminf_{n\rightarrow \infty} GF_{n,\delta_{n}}(u_{n})\geq 
\alpha_d TV(u;\rho^2).
\end{align*}

For general kernel $\eta$ as in (H\ref{H2}), we can find a sequence $\{\eta^m\}_{m\in\N}$ such that each~$\eta^m$ is of the form
$$\eta^m=\sum_{i=1}^m \eta_0^i$$ 
for some $\eta_0^i$ as in~\eqref{eta_0}, for $i=1,\ldots,m$, and $\eta^m\uparrow \eta$.
Set $GF_{n,\delta_{n}}^{m}$ and $GF_{n,\delta_{n}}^{0,i}$ to be the functional  in \eqref{tilde} defined with kernels $\eta^m$ and $\eta^i_0$, respectively (keep hypothesis (H\ref{H2}) into account), and $\alpha_d^{0,i}$ and $\alpha_d^m$ the constants in \eqref{costant} relative to the kernels $\eta^i_0$ and $\eta^m$, respectively (notice that $\alpha_d^m=\sum_{i=1}^m \alpha_d^{0,i}$).
By the previous argument and the superadditivity of the $\liminf$, we can write
\begin{align*}
\liminf_{n\rightarrow \infty} GF_{n,\delta_{n}}(u_{n})\geq \liminf_{n\to\infty} GF_{n,\delta_n}^m(u_n)\geq \sum_{i=1}^{m}\liminf_{n\rightarrow \infty}GF_{n,\delta_{n}}^{0,i}(u_{n}) 
\geq  \sum_{i=1}^{m} 
\alpha_d^{0,i} TV(u;\rho^2).
\end{align*}
By the monotone convergence theorem, we conclude that 
$$\liminf_{n\rightarrow \infty} GF_{n,\delta_{n}}(u_{n}) \geq \lim_{m\to\infty} \sum_{i=1}^{m} 
\alpha_d^{0,i} TV(u;\rho^2)= \lim_{m\to\infty} 
\alpha_d^m TV(u;\rho^2)= 
\alpha_d TV(u;\rho^2).$$

\emph{Step 2 -- limsup inequality.}
Observe that it is not restrictive to assume that $u\in \BV(D;\rho)$ with $u(x)\in\{0,1\}$ for $\nu$-a.e.~$x\in D$, and also that $u$ is polyhedral.
Let $u_\delta$ be the recovery sequence for $\F_\delta$ as in the proof of the limsup inequality in  Proposition~\ref{questaelaproposizionepiuimportante}; in particular, the functions $u_{\delta}$ are Lipschitz for every $\delta>0$, $\sup_{\delta>0} \int_D |\nabla u_\delta(x)|\,\de x<+\infty$, and 
$$\limsup_{\delta\to0} \F_{\delta}(u_{\delta};\rho)\leq \alpha_d TV(u;\rho^2).$$
We are going to show that the 
sequence $u_n=u_{\delta_n}*\nu_n$ for all $n\in\N$ is a recovery sequence for~$u$.

By Remark~\ref{remark31}(1), let us consider a kernel~$\eta_0$ as in~\eqref{eta_0}.
Define now 
\begin{equation*}
\tilde{\delta}_{n}\coloneqq \delta_{n}+2 r_0^{-1}\norma{{\rm Id}-T_{n}}_{\infty}
\end{equation*} 
and let $\tilde{u}_{n}\coloneqq u_n\circ T_{n}$. Notice that, whereas \eqref{limite} still holds true for this new definition of~$\tilde{\delta}_n$, inequality \eqref{questoqui} is reversed, so that we have that, for almost every $(x,y)\in D\times D$,
\begin{align*}
\eta_0\bigg(\frac{\vert x-y \vert}{\tilde{\delta}_{n}}\bigg)\geq \eta_0\bigg(\frac{\vert T_{n}(x)-T_{n}(y) \vert}{\delta_{n}}\bigg).
\end{align*}
Then, for all $n\in\N$,
\begin{align}\label{dis2}
\begin{aligned}
&\frac{1}{\tilde{\delta}_{n}^{d+1}}\iint_{D\times D}\eta_0\bigg(\frac{\vert T_{n}(x)-T_{n}(y)\vert}{\delta_{n}}\bigg)\tilde{u}_{n}(x)(1-\tilde{u}_{n}(y))
\,\de x\de y\\
\leq&\frac{1}{\tilde{\delta}_{n}^{d+1}}\iint_{D\times D}\eta_0\bigg(\frac{\vert x-y\vert}{\tilde{\delta}_{n}}\bigg)\tilde{u}_{n}(x)(1-\tilde{u}_{n}(y))
\,\de x\de y.
\end{aligned}
\end{align}

Since for $x,y\in D$
\begin{align*}
&\,u_n(x)(1-u_n(y))-\tilde u_n(x)(1-\tilde u_n(y))\\
=&\,(u_n(x)-\tilde u_n(x))(1-u_n(y)) + \tilde u_n(x)(\tilde u_n(y)-u_n(y))
\end{align*}
and $0\leq u_n\leq 1$, recalling (H\ref{H3}), we have
\begin{align*}
&\frac{1}{\tilde{\delta}_{n}}\left\vert \iint_{D\times D} \eta_{0}\bigg(\frac{|x-y|}{\tilde{\delta}_{n}}\bigg)\left(u_n(x)(1-u_n(y))-\tilde u_{n}(x)(1-\tilde u_{n}(y)) \right)\rho(x)\rho(y)\,\de x\de y\right\vert \\
\leq&\,\frac{b^2}{\tilde{\delta}_{n}} \iint_{D\times D} \eta_{0}\bigg(\frac{|x-y|}{\tilde{\delta}_{n}}\bigg) |(u_n(x)-\tilde u_n(x))(1-u_n(y)) + \tilde u_n(x)(\tilde u_n(y)-u_n(y))| \,\de x\de y\\
\leq&\, \frac{2b^2}{\tilde{\delta}_{n}}\iint_{D\times D} \eta_{0}\bigg(\frac{|x-y|}{\tilde{\delta}_{n}}\bigg) |u_n(x) -u_n(T_{n}(x))| \, \de x\de y\\
\leq&\, \frac{C}{\tilde{\delta}_{n}} \int_{D} |u_n(x) -u_n(T_{n}(x))| \, \de x \leq \frac{C\lVert {\rm Id}-T_n\rVert_\infty}{\tilde{\delta}_{n}} \int_{D} |\nabla u_{\delta_n}(x)| \, \de x \to 0
\end{align*}
owing to the boundedness of the last integral and to Remark~\ref{iofareiunremark}.
%
This implies, invoking \eqref{limite} and inequality \eqref{dis2}, that
\begin{align*}
&\, \limsup_{n\rightarrow \infty} GF_{n,\delta_{n}}(u_{n})\\
=&\,\limsup_{n\rightarrow \infty}\frac{2}{\tilde{\delta}_{n}^{d+1}}\iint_{D\times D} \eta_0\bigg(\frac{\vert T_{n}(x)-T_{n}(y)\vert}{\delta_{n}}\bigg)\tilde u_{n}(x)(1-\tilde u_{n}(y))\rho(x)\rho(y)\,\de x\de y\\
\leq&\,\limsup_{n\rightarrow \infty}\frac{2}{\tilde{\delta}_{n}^{d+1}}\iint_{D\times D} \eta_0\bigg(\frac{| x-y|}{\tilde\delta_n}\bigg) \tilde u_n(x)(1-\tilde u_{n}(y))\rho(x)\rho(y)\,\de x\de y\\
=&\, \limsup_{n\rightarrow \infty}\frac{2}{\tilde{\delta}_{n}^{d+1}}\iint_{D\times D} \eta_0\bigg(\frac{| x-y|}{\tilde\delta_n}\bigg) u_n(x)(1- u_{n}(y))\rho(x)\rho(y)\,\de x\de y\\
=&\,\limsup_{n\rightarrow \infty}\F_{\tilde{\delta}_{n}}({u}_{\tilde{\delta}_n};\rho)\leq \alpha_d TV(u;\rho^2).
\end{align*}

The case of general $\eta$ satisfying (H\ref{H2}) can now be obtained by adapting the argument of Step 2 in the proof of \cite[Theorem~4.1]{garcia2016}, which concludes the proof of our theorem. \qed

\subsection{Proofs of Lemma~\ref{corb1} and Theorem~\ref{mainresult}}
\begin{proof}[Proof of Lemma~\ref{corb1}]
By \eqref{ultima}, we have that $GF_{n,\delta_n}(\chi_{A_n})$ is uniformly bounded with respect to~$n$, so that, by combining Lemma~\ref{compatt} and Proposition~\ref{prop2.7}-5, there exists a set $A\in\B(D)$ with finite perimeter such that $\chi_{A_n}\to\chi_A$ in $L^1(D)$, obtaining the first convergence in \eqref{hypothesis}.

By compactness, there exists a function $\bar\theta\in L^\infty(D)$ such that $\chi_{O_n}\stackrel{*}\rightharpoonup \bar{\theta}$ in $L^{\infty}(D)$.
We observe that $\bar\theta\in\mathcal{C}(A)$, where, for any $B\in\B(D)$, we define
$$\mathcal{C}(B)\coloneqq\bigg\{\theta\in L^1(D):0\leq \theta\leq 1,\, \int_D \theta \,\de x=1,\, \int_{B}\theta \,\de x=0\bigg\}.$$
Since, for every $\widetilde O\in\B(D)$ such that $\chi_{\widetilde O}\in\mathcal{C}(A_n)$, by \eqref{ultima} we have that 
$$W_p(A_n,O_n)\leq W_p(A_n,\widetilde O)+\frac1n,$$
letting $n\to\infty$, we obtain that 
$$W_p(A,\bar\theta)\leq W_p(A,\widetilde O)\qquad\text{for every $\widetilde O\in\B(D)$ such that $\chi_{\widetilde O}\in\mathcal{C}(A)$.}$$
By a relaxation argument, we deduce that 
$$W_p(A,\bar\theta)\leq W_p(A,\theta)\qquad\text{for every $\theta\in\mathcal{C}(A)$,}$$
so that $\bar\theta$ solves the minimization problem
\begin{equation}\label{min_pb}
    \min\big\{ W_{p}(A,\theta): \theta\in\mathcal{C}(A)\big\}
\end{equation}
By \cite[Theorem 3.10]{buttazzo2020wasserstein}, the solution $\bar\theta$ to problem \eqref{min_pb} is the characteristic function, $\bar\theta=\chi_{O}$, of a certain set $O\in\B(D)$.
The second convergence in \eqref{hypothesis} follows and the proof is concluded.
\end{proof}

We now give the proof of Theorem \ref{mainresult}. 

\begin{proof}[Proof of Theorem \ref{mainresult}]

We start by observing that the term $(A,O)\mapsto W_{p}(A,O)$ is a continuous perturbation of $GF_{n,\delta_{n}}$ with respect to the topology in which we compute the $\Gamma$-limit, so that it will only be necessary to prove that 
\begin{equation*}
\Big(\Gamma(L^1(D))-\lim_{n\to\infty} GF_{n,\delta_n}\Big)(\cdot)=\frac{\alpha_d}{|D|^2}{\rm Per}(\cdot;D).
\end{equation*}
The liminf inequality is a direct consequence of Theorem~\ref{ourgamma}. We now prove the limsup inequality.  
The limsup inequality follows from a standard approximation result once we prove it for a polyhedral set $A\subset D$.
By Remark~\ref{facciamounremark} and Theorem~\ref{ourgamma}, the constant sequence is a recovery sequence for~$A$.
\end{proof}

\bigskip

\noindent\textbf{Acknowledgements} 
The authors are members of the GNAMPA group (\emph{Gruppo Nazionale per l'Analisi Matematica, la Probabilit\`{a} e le loro Applicazioni}) of INdAM (\emph{Istituto Nazionale di Alta Matematica ``F.~Severi''}). AMH and MM acknowledge support from the MIUR grant \emph{Dipartimenti di Eccellenza 2018--2022} (E11G18000350001) of the Italian Ministry for University and Research.

\bibliographystyle{siam}
\bibliography{cub}
 \end{document}